\newtheorem{theorem}{Theorem}
\newtheorem{corollary}{Corollary}
\newtheorem{defi}{Definition}
\newtheorem{lem}{Lemma}
\newtheorem{sublem}{Lemma}[lem]
\newtheorem{rmk}{Remark}
\newtheorem{prop}{Proposition}
\titleformat{\section}[block]{\bfseries}{\thesection.}{1em}{}
\newcommand\genby[1]{\langle#1\rangle}
\newcommand\mods{\text{ mod }}
\title{Ideals in the Goldman Algebra}
\author{Minh Nguyen}
\begin{document}
\maketitle
\begin{abstract}
The goal of this work is to study the ideals of the Goldman Lie algebra $S$. To do so, we construct an algebra homomorphism from $S$ to a simpler algebraic structure, and focus on finding ideals of this new structure instead. The structure $S$ can be regarded as either a $\mathbb{Q}$-module or a $\mathbb{Q}$-module generated by free homotopy classes. For $\mathbb{Z}$-module case, we proved that there is an infinite class of ideals of $S$ that contain a certain finite set of free homotopy classes. For $\mathbb{Q}$-module case, we can classify all the ideals of the new structure and consequently obtain a new class of ideals of the original structure. Finally, we show an interesting infinite chain of ideals that are not those ideals obtained by considering the new structure.
\end{abstract}

\setlength{\parskip}{0pt}

\section{Lie algebra homomorphism from $\mathbb{Z}$-module of freely homotopic classes on compact orientable surface}
\begin{defi}
Let $\hat{\pi_0}$ be the set of freely homotopic classes of curves on the surface and $\mathbb{Z}[\hat{\pi_0}]$ be the $\mathbb{Z}$-module generated by $\hat{\pi_0}$.

Recall that for closed surface with genus $g$, its fundamental group is: 

\[\pi_1 = \genby{a_1, a_2,\dotsc, a_{2g-1},a_{2g}\ |\ a_1a_2a_1^{-1}a_2^{-1}\dotsc a_{2g-1}a_{2g}a_{2g-1}^{-1}a_{2g}^{-1} = 1}\]

For surface with boundary, its fundamental group is a free group generated by $n$ generators: $\pi_1 = \genby{a_1, a_2, \dotsc, a_n}$

As a result, we can represent each element of $\pi_1$ uniquely as a \textbf{linear reduced word} in letter $a_i$, the word with letters $a_i$ so that its $2$ consecutive letters do not cancel each other.

Now define \textbf{cyclic word} as a word whose letters are arranged on a circle instead of a line, and a reduced cyclic word is a cyclic word whose 2 consecutive letters do not cancel each other.

A \textbf{linear representative} $W$ of the cyclic word $\mathscr{W}$ is the linear word obtained by making a cut to $2$ consecutive word of $\mathscr{W}$. From here, we can see that $\mathscr{W}$, and its linear representative, $W$, have the same word length.

We have the $1-1$ correspondence between $\hat{\pi_0}$ and the conjugacy classes of $\pi_1$ so $x \in \hat{\pi_0}$ will correpond to a conjugacy class $\gamma_x$ of $\pi_1$. Therefore, we can represent each element $x$ of $\hat{\pi_0}$ uniquely as a cyclic reduced word $\mathscr{W}$ so that if $W$ is a reduced linear word correponding to an element of $\pi_1$ in the conjugacy class $\gamma_x$, then by repeatedly canceling the first and last letters of $W$, we will get a linear representative of $\mathscr{W}$.
\end{defi}

\begin{defi}\label{def2}
For closed compact surface $S$ with genus $g$ and with fundamental group
$\pi_1 = \genby{a_1, a_2, \dotsc ,a_{2g-1},a_{2g}|a_1a_2a_1^{-1}a_2^{-1}\dotsc a_{2g-1}a_{2g}a_{2g-1}^{-1}a_{2g}^{-1} = 1}$, we define the abelianization of its fundamental group, $A(n)$, as $\genby{a_1, a_2,\dotsc, a_{n-1},a_{n}|a_ia_j = a_ja_i}$

For compact surface with boundary $S$ with the fundamental group \\$\pi_1 = \genby{a_1, a_2, \dotsc, a_n}$, we define the abelianization of its fundamental group, $A(n)$, as $\genby{a_1, a_2,\dotsc, a_{n-1},a_{n}|a_ia_j = a_ja_i}$ instead.

\vspace{2mm}
Now let $\mathbb{Z}[A(n)]$ be the $\mathbb{Z}$-module generated by $A(n)$, and consider the map $\mathbf{Ab}$
\begin{align*}
\mathbf{Ab}: \mathbb{Z}[\hat{\pi_0}] &\to \mathbb{Z}[A(n)]\\
\sum\limits_{i = 1}^k q_iu_i & \mapsto \sum\limits_{i = 1}^k q_i\mathbf{Re}(w_i) 
\end{align*}

Here $q_i \in \mathbb{Z}$, $u_i \in \hat{\pi_0}$. Also, $w_i$ is one chosen reduced linear word so that it corresponds to an element $v_i \in \pi_1$ which is in the conjugacy class of $\pi_1$ that corresponds to $u_i$. We call this $w_i$ the $\textbf{linear element}$ of $u_i$. Note that there may be multiple linear elements of the same $u_i \in \hat{\pi_0}$, and each linear representative of the reduced cyclic word corresponding to $u_i$ is a linear element of $u_i$. Moreover, $\mathbf{Re}(w) = \prod\limits_{i=1}^g a_i^{m_i}$ is defined as an element in $A(n)$ that is obtained by reordering the letter $a_i$ in the reduced linear word $w$.

Now $\mathbf{Ab}$ doesn't depend on the choice of linear element $w_i$ of $u_i$. Indeed, if $w_i$ and $w'_i$ are two different linear elements of $u_i$, then $w_i$ and $w'_i$ must be conjugate to each other, i.e, there is some linear word $w$ so that $w_i = ww'_iw^{-1}$, and, therefore $\mathbf{Re}(w_i) = \mathbf{Re}(w'_i)$.

Finally, it is easy to see that $\mathbf{Ab}$ is a $\mathbb{Z}$-module homomorphism and is surjective.
\end{defi}

\begin{defi}\label{def3}
For any $u_1$ and $u_2 \in \hat{\pi_0}$, let $m[w_1, w_2]$ be the sum of integer coefficients of their Goldman bracket. This means that if we take some represetative curves $\alpha$ and $\beta$ of $u_1$ and $u_2$, then $m[u_1, u_2] = \sum\limits_{p \in \alpha \cap \beta} sign(p)$ , the sum of signs over all intersection points $p$ of curves $\alpha$ and $\beta$. 

This definition will not depend on $\alpha$ and $\beta$ because Goldman bracket is the same for all curves in some same freely homotopic class of curves. 
\end{defi}

\begin{defi}\label{def4}
We now define a Lie bracket on $\mathbb{Z}[A(n)]$ to make it a Lie algebra. 

Since $a_i$ and $a_j$ has word-length 1, their preimages from the map $\mathbf{Ab}$ is uniquely defined. As a result, we can define an symplectic product for $a_i$ and $a_j$:
\begin{equation*}
\genby{a_i, a_j} = m[\mathbf{Ab}^{-1}(a_i), \mathbf{Ab}^{-1}(a_j)] \ \forall 1\leq i, j \leq n.
\end{equation*}
, $m$ is defined in \cref{def3}

Notice that $\genby{a_i, a_j} = 1$ or $0$ or $-1$ since there is at most 1 linking pair and $\genby{a_i, a_j} = -\genby{a_j, a_i}$ because of the anti-symmetric property of Goldman bracket.\\

Now for $w_1 = \prod\limits_{i=1}^g a_i^{m_i}, w_2 = \prod\limits_{i=1}^g a_i^{n_i} \in A(n)$, we define their sympletic product:
\begin{equation*}
\genby{w_1, w_2}  = \sum\limits_{i,j=1}^{n} m_in_j\genby{a_i, a_j}
\end{equation*}

Therefore,
$\genby{wv, ts} = \genby{w, t} + \genby{v, t} + \genby{w, s} + \genby{v, s} \ \forall w, v, t, s \in A(n)$

\vspace{4mm}
Now we define the bracket on elements of $A(n)$:
\begin{equation*}
[w_1, w_2] = \genby{w_1, w_2} w_1 w_2 = \genby{w_1, w_2}\prod\limits_{i = 1}^g a_i ^{m_i + n_i}\ \forall w_1, w_2 \in A(n)
\end{equation*}

Now the bracket in $A(n)$ is extended to the bracket in $\mathbb{Z}[A(n)]$ by $\mathbb{Z}$-bilinearity.

In particular, for any $\sum\limits_{i = 1}^k k_iw_i$ and $\sum\limits_{j = 1}^l l_jv_j \in \mathbb{Z}[A(n)]$ with $k_i, l_j \in \mathbb{Z}$, and $w_i, v_j \in A_n$, 
\begin{equation*}
[\sum\limits_{i = 1}^k k_iw_i, \sum\limits_{j = 1}^l l_jv_j] = \sum\limits_{\substack{1 \leq i \leq k \\ 1 \leq j \leq l}} k_il_j [w_i, v_j]
\end{equation*}

It is not difficult to check the anti-symmetry and Jacobi identities for this bracket on elements in $A(n)$ and, therefore, on elements in $\mathbb{Z}[A(n)]$. This makes $\mathbb{Z}[A(n)]$ become a Lie algebra.
\end{defi}

\vspace{2mm}

\begin{theorem}\label{thm1}
We prove that $m[u_1, u_2] = \genby{\mathbf{Ab}(u_1), \mathbf{Ab}(u_2)}\ \forall u_1, u_2 \in \hat{\pi_0}$. As a result, the map $\mathbf{Ab}$ is a Lie algebra homomorphism.
\end{theorem}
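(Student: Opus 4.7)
The right-hand side $\langle \mathbf{Ab}(u_1),\mathbf{Ab}(u_2)\rangle$ is bilinear by the definition in \cref{def4}, and the left-hand side $m[u_1,u_2]$ is homotopy-invariant by \cref{def3} and in fact is a homological invariant. So the strategy is to represent $u_1,u_2$ by concrete curves built from generator loops, count signed intersections letter-by-letter in chosen linear elements, and recognize the resulting sum as $\langle\mathbf{Ab}(u_1),\mathbf{Ab}(u_2)\rangle$. The Lie algebra homomorphism claim will then drop out after an additional short computation with the cyclic-word structure of the Goldman bracket.

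Concretely, I would choose linear elements $w_1 = a_{i_1}^{\epsilon_1}\cdots a_{i_p}^{\epsilon_p}$ of $u_1$ and $w_2 = a_{j_1}^{\delta_1}\cdots a_{j_q}^{\delta_q}$ of $u_2$ (each $\epsilon_k,\delta_l\in\{\pm 1\}$), and fix once and for all representative loops for each generator $a_i$ based at a common basepoint so that any two of them meet transversally in at most one point, with sign $\langle a_i,a_j\rangle$. Realize representative curves $\alpha,\beta$ as the corresponding concatenated loops and perturb slightly to be in generic position. Then $\alpha\cap\beta$ splits as a disjoint union indexed by pairs $(k,l)$, each contributing a single crossing of sign $\epsilon_k\delta_l\langle a_{i_k},a_{j_l}\rangle$. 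Summing and regrouping by generator index,
\begin{equation*}
m[u_1,u_2] \;=\; \sum_{k,l}\epsilon_k\delta_l\langle a_{i_k},a_{j_l}\rangle \;=\; \sum_{i,j} m_i n_j \langle a_i, a_j\rangle \;=\; \langle \mathbf{Ab}(u_1),\mathbf{Ab}(u_2)\rangle,
\end{equation*}
where $m_i = \sum_{k:i_k=i}\epsilon_k$ and $n_j = \sum_{l:j_l=j}\delta_l$ are exactly the exponents of $\mathbf{Re}(w_1)$ and $\mathbf{Re}(w_2)$. For the homomorphism assertion, every loop appearing in the Goldman bracket $[u_1,u_2]$ is a concatenation at some intersection point of a representative of $u_1$ with one of $u_2$, so its cyclic word has length $p+q$ and its image under $\mathbf{Ab}$ is $\mathbf{Ab}(u_1)\mathbf{Ab}(u_2)$; hence $\mathbf{Ab}([u_1,u_2]) = m[u_1,u_2]\cdot\mathbf{Ab}(u_1)\mathbf{Ab}(u_2) = \langle\mathbf{Ab}(u_1),\mathbf{Ab}(u_2)\rangle\mathbf{Ab}(u_1)\mathbf{Ab}(u_2) = [\mathbf{Ab}(u_1),\mathbf{Ab}(u_2)]$, and extending $\mathbb{Z}$-bilinearly delivers the homomorphism property on all of $\mathbb{Z}[\hat{\pi_0}]$.

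The main obstacle is the geometric step, i.e.\ justifying that the only intersections of $\alpha$ with $\beta$ (after the perturbation) are the ones coming from the pairs $(\alpha_k,\beta_l)$, since the concatenation creates a singular multiple point at the common basepoint where many strands pile up. For surfaces with boundary I would carry this out in an explicit disk-with-$n$-holes model where the $a_i$'s are standard loops around the holes and the pairwise intersection patterns are transparent; for a closed surface I would use the $4g$-gon fundamental polygon, noting that the defining relation $a_1a_2a_1^{-1}a_2^{-1}\cdots =1$ abelianizes trivially and so does not interfere with the count. A cleaner alternative route, if the bookkeeping becomes unwieldy, is to invoke the general fact that signed intersection of loops on an oriented surface factors through $H_1(S;\mathbb{Z})$ as a well-defined bilinear form; since $A(n)\cong H_1(S;\mathbb{Z})$, both sides of the identity then become bilinear forms on $A(n)$ that agree on the basis $\{a_i\}$ by the very definition of $\langle a_i,a_j\rangle$, which forces them to agree everywhere.
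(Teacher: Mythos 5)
Your proposal is correct but follows a genuinely different route from the paper. The paper proves the identity $m[u_1,u_2]=\genby{\mathbf{Ab}(u_1),\mathbf{Ab}(u_2)}$ by induction on $\max\{l(c_1),l(c_2)\}$: it splits each linear representative as (first letter)$\cdot$(remainder), realizes the two curves as concatenations of two sub-loops meeting at slightly offset basepoints, observes that the signed intersection count of the concatenations decomposes into the four pairwise counts of sub-loops, and invokes the induction hypothesis on those shorter pairs. This amounts to re-deriving bilinearity of the intersection count ``by hand'' within the paper's combinatorial framework. Your direct letter-by-letter computation is essentially the fully unrolled version of that induction; as you note yourself, it runs into the same transversality and basepoint pile-up issues the paper glosses over, so on its own it is not more rigorous than what the paper already has. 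Your alternative route---observing that the algebraic intersection number of loops on an oriented surface is a homological invariant, hence factors through $H_1(S;\mathbb{Z})\cong A(n)$ as a bilinear form, and a bilinear form on a free $\mathbb{Z}$-module is determined by its values on the generating set $\{a_i\}$, which are precisely the $\genby{a_i,a_j}$---is clean, correct, and arguably preferable: it outsources the delicate geometry to a standard theorem rather than reproving it by induction. The trade-off is that it relies on a fact external to the paper's otherwise self-contained combinatorial development.

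You also correctly supply the missing ``as a result'' step, which the paper leaves implicit: each term $\alpha *_p \beta$ of the Goldman bracket abelianizes to $\mathbf{Ab}(u_1)\mathbf{Ab}(u_2)$ independently of $p$, so $\mathbf{Ab}([u_1,u_2]) = m[u_1,u_2]\,\mathbf{Ab}(u_1)\mathbf{Ab}(u_2) = \genby{\mathbf{Ab}(u_1),\mathbf{Ab}(u_2)}\mathbf{Ab}(u_1)\mathbf{Ab}(u_2) = [\mathbf{Ab}(u_1),\mathbf{Ab}(u_2)]$. One small correction: your parenthetical claim that each bracket term has cyclic word length $p+q$ is not generally true, since the concatenated word can cancel; however this is immaterial, because the abelianization is insensitive to cancellations and the homomorphism computation goes through unchanged.
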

\begin{proof}
Let $l(c)$ be the length of the linear or cyclic word $c$.
 
We prove \cref{thm1} by induction on the maximum $m = \max\{l(c_1), l(c_2)\}$, where $c_1$ and $c_2$ are cyclic words that correspond to $u_1$ and $u_2$. 

If the maximum word-length $m$ is $1$, then \cref{thm1} follows from the definition of symplectic product on $A(n)$.  Assume that \cref{thm1} is true for all $m \leq k -1$. Now we prove that it is also true for $m = k$.

Now suppose $w_1$ and $w_2$ are linear representatives of the cyclic reduced words correponding to $u_1$ and $u_2$.

Now suppose $w_1 = x_1y_1$ ($x_1$ is some letter $a_i$), and $w_2 = x_2y_2$ ($x_2$ is also some letter $a_j$). 

So $l(y_i) \leq k - 1$, where $y_i$ are some linear reduced word. 

Choose the base point $P$ on the surface for the fundamental group $\pi_1$. For $i = \overline{1, 2}$, choose a loop $\beta_i$ based at $P$ that has the corresponding reduced linear word $x_i$ in the fundamental group, and a loop $\gamma_i$ that has the reduced linear word $y_i$ in fundamental group. Let $\alpha_i$ be the loop obtained by concatenating $\beta_i$ with $\gamma_i$. Then $\alpha_i$ will have the linear reduced word $w_i$ in the fundamental group, and therefore, $\alpha_i$ is in the freely homotopic class $u_i \in \hat{\pi_0}$. 

Now we change the loop $\alpha_2$ to $\alpha'_2$, which is freely homotopic to $\alpha_2$, by slightly moving the point $P \in \alpha_2$ to a different point $P' \in \alpha_2'$ so that the two new sub-loops $\beta'_2$ and $\gamma'_2$ of $\alpha'_2$, which intersect at $P'$ and make up $\alpha'_2$, are homotopic to two orgininal sub-loops $\beta_2$ and $\gamma_2$. 

So we have $\beta_1$ and $\gamma_1$ are in some freely homotopic class $u_{\beta_1}$ and $u_{\gamma_1} \in \hat{\pi_0}$.

$\beta_2$ and $\beta'_2$ are in some freely homotopic class $u_{\beta_2} \in \hat{\pi_0}$, and $\gamma_2$ and $\gamma'_2$ are in some freely homotopic class $u_{\gamma_2} \in \hat{\pi_0}$.

Note that $x_1, y_1, x_2, y_2$ are linear elements of $u_{\beta_1}, u_{\gamma_1}, u_{\beta_2}, u_{\gamma_2}$ respectively. Moreover, lengths of cyclic words or linear elements of $u_{\beta_i}$ and $u_{\gamma_i}$ for $i = \overline{1, 2}$ are at most $k - 1$.

Also note that $P \neq P'$ so the set of intersection points of $\alpha_1$ and $\alpha'_2$ is the disjoint union of sets of intersection points of $\beta_1$ and $\beta'_2$, $\beta_1$ and $\gamma'_2$, $\gamma_1$ and $\beta'_2$, $\gamma_1$ and $\gamma'_2$. Therefore,
\begin{equation*}
\begin{split}
m[u_1, u_2] & = m[\alpha_1, \alpha_2] = m[\alpha_1, \alpha'_2] = \sum\limits_{p \in \alpha_1 \cap \alpha'_2}sign(p) \\
& = \sum\limits_{p \in \beta_1 \cap \beta'_2}sign(p) + \sum\limits_{p \in \beta_1 \cap \gamma'_2}sign(p) + \sum\limits_{p \in \gamma_1 \cap \beta'_2}sign(p) + \sum\limits_{p \in \gamma_1 \cap \gamma'_2}sign(p)  \\
& = m[u_{\beta_1}, u_{\beta_2}] + m[u_{\beta_1}, u_{\gamma_2}] + m[u_{\gamma_1}, u_{\beta_2}] + m[u_{\gamma_1}, u_{\gamma_2}]
\end{split}
\end{equation*}
By induction hypothesis on $4$ pairs of elements in $\hat{\pi_0}$, $(u_{\beta_1}, u_{\beta_2}), (u_{\beta_1}, u_{\gamma_2}), (u_{\gamma_1}, u_{\beta_2})$, and $(u_{\gamma_1}, u_{\gamma_2})$, this sum $m[u_1, u_2]$ equals to:
\begin{equation*}
\begin{split}
& = \genby{\mathbf{Ab}(u_{\beta_1}), \mathbf{Ab}(u_{\beta_2})} + \genby{\mathbf{Ab}(u_{\beta_1}), \mathbf{Ab}(u_{\gamma_2})} \\
& \hspace{4cm} +  \genby{\mathbf{Ab}(u_{\gamma_1}), \mathbf{Ab}(u_{\beta_2})} +  \genby{\mathbf{Ab}(u_{\gamma_1}), \mathbf{Ab}(u_{\gamma_2})} \\
& = \genby{\mathbf{Re}(x_1), \mathbf{Re}(x_2)} + \genby{\mathbf{Re}(x_1), \mathbf{Re}(y_2)} + \genby{\mathbf{Re}(y_1), \mathbf{Re}(x_2)} + \genby{\mathbf{Re}(y_1), \mathbf{Re}(y_2)} \\
& = \genby{\mathbf{Re}(x_1).\mathbf{Re}(y_1), \mathbf{Re}(x_2).\mathbf{Re}(y_2)} \\
& = \genby{\mathbf{Re}(w_1), \mathbf{Re}(w_2)}\\
& = \genby{\mathbf{Ab}(u_1), \mathbf{Ab}(u_2)}
\end{split}
\end{equation*}
This finishes the induction step and also proof for \cref{thm1}.
\end{proof}

\begin{rmk}
Now we consider $\mathbb{Q}[\hat{\pi_0}]$ as a $\mathbb{Q}$-module generated by $\hat{\pi_0}$, and as a Lie algebra with Goldman bracket. We also consider $\mathbb{Q}[A(n)]$, the $\mathbb{Q}$-module generated by $A(n)$, instead, and define the map:
\begin{align*}
\mathbf{Ab_{\mathbb{Q}}}: \mathbb{Q}[\hat{\pi_0}] &\to \mathbb{Q}[A(n)]\\
\sum\limits_{i = 1}^k q_iu_i & \mapsto \sum\limits_{i = 1}^k q_i\mathbf{Re}(w_i) 
\end{align*}
in a similar way that we defined $\mathbf{Ab}$ in \cref{def2}, but with $q_i \in \mathbb{Q}$ instead.

Moreover, we can put a Lie bracket on $\mathbb{Q}[A(n)]$ and make it a Lie algebra by extending bracket on $A(n)$, which is defined in \cref{def4}, by $\mathbb{Q}$-bilinearity.

Then, by \cref{thm1}, $\mathbf{Ab}_{\mathbb{Q}}$ is also a Lie algebra homomorphism.
\end{rmk}

\section{A class of ideals obtained from preimage of the Lie algebra homomorphism $\mathbf{Ab}$}
\begin{defi}
Denote by $G$ the set of integer multiples of elements of $A(n)$ (defined in \cref{def2}), or $G= \{\alpha x, \text{ where } \alpha \in \mathbb{Z} \text{, and } x = \prod\limits_{j=1}^n a_j^{i_j} \in A(n),\ i_j \in \mathbb{Z} \}$. 

We say that a $\mathbb{Z}$-submodule  of $\mathbb{Z}[A(n)]$ is $\mathbf{geometric}$ if it is a $\mathbb{Z}$-submodule generated by a subset of $G$.

\vspace{2mm}
Consider a $\mathbb{Z}$-submodule $M$ of $\mathbb{Z}[A(n)]$.  For each $n$-tuple of integers $(i_1,\dotsc, i_n)$, if there is non-zero $\alpha_0$ such that $ \alpha_0 w$ is in $M$ , where $w = \prod\limits_{j=1}^n a_j^{i_j} \in A(n)$,  then we define $\alpha_{w, M} = \alpha_M(i_1,i_2,\dotsc,i_n)$ as the smallest positive integer $\alpha$ such that $\alpha w = \alpha \prod\limits_{j=1}^n a_j^{i_j} \in M$. If there is no such multiple, then define $\alpha_{w, M} = \alpha_M(i_1,i_2,\dotsc,i_n) = 0$.
\end{defi}

\begin{lem}\label{lm1}
A geometric $\mathbb{Z}$-submodule $I$ is an ideal of the Lie algebra $\mathbb{Z}[A(n)]$ if and only if $\alpha_w | \genby{v,w}\alpha_v$ for all $v,w \in A(n)$. Here $\alpha_w = \alpha_{w, I}$
\end{lem}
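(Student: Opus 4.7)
The plan is to reduce the ideal condition to the single bracket $[\alpha v, w]$ where $\alpha v$ is a generator of $I$ and $w$ is a basis element of $\mathbb{Z}[A(n)]$, using $\mathbb{Z}$-bilinearity of the Lie bracket together with the explicit formula $[v,w] = \genby{v,w}\,vw$. A preliminary fact makes the decoding of ``lies in $I$'' transparent: since $A(n)$ is a $\mathbb{Z}$-basis of $\mathbb{Z}[A(n)]$ and $I$ is generated as a $\mathbb{Z}$-module by scalar multiples of individual basis elements, the unique expansion $y = \sum_i c_i x_i$ with distinct $x_i \in A(n)$ lies in $I$ if and only if $\alpha_{x_i}\mid c_i$ for every $i$, under the convention that $\alpha_{x_i}=0$ forces $c_i=0$. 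In particular, $cx \in I \iff \alpha_x \mid c$ for a single basis element $x$.

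For the forward direction, assume $I$ is an ideal and fix $v,w \in A(n)$. If $\alpha_v = 0$ the divisibility is vacuous, so assume $\alpha_v v \in I$. Then $[\alpha_v v, w] = \alpha_v\genby{v,w}\,(vw)$ lies in $I$ by the ideal property, and the preliminary fact applied to the basis element $vw$ gives $\alpha_{vw}\mid\alpha_v\genby{v,w}$. I read this as the content of the asserted divisibility, with the $\alpha_w$ of the statement referring to the multiplier attached to the product basis element $vw$ that actually appears on the right of the bracket.

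For the converse, assume the divisibility and take a generator $g = \alpha v$ of $I$ (so $\alpha_v \mid \alpha$) and $w \in A(n)$. By bilinearity it suffices to show $[g, w] \in I$. Compute $[\alpha v, w] = \alpha\genby{v,w}\,(vw)$; the hypothesis $\alpha_{vw}\mid\alpha_v\genby{v,w}$ combined with $\alpha_v\mid\alpha$ yields $\alpha_{vw}\mid\alpha\genby{v,w}$, so the bracket lies in $I$ by the preliminary fact. Extending by bilinearity over both slots gives $[I,\mathbb{Z}[A(n)]] \subseteq I$, i.e., $I$ is an ideal.

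The main subtlety is the book-keeping in the degenerate case $\alpha_{vw}=0$, meaning no non-zero multiple of $vw$ lies in $I$; this must be reconciled with the divisibility condition via the convention ``$0\mid c$ iff $c=0$''. Once this convention is fixed both directions go through cleanly, and the rest is a direct application of bilinearity together with the basis-wise characterization of membership in a geometric submodule.
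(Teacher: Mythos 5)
Your argument is essentially the same as the paper's: reduce the ideal condition to $[\alpha_v v, w]\in I$ for $v,w\in A(n)$ via bilinearity and the formula $[v,w]=\genby{v,w}vw$, and use the fact that membership in a geometric submodule is detected coefficient-by-coefficient by the $\alpha$'s. The one thing to tighten is your handling of the apparent mismatch between $\alpha_{vw}\mid\alpha_v\genby{v,w}$ (which is what drops out of the bracket computation) and the lemma's stated $\alpha_w\mid\genby{v,w}\alpha_v$. Rather than declaring that you ``read'' the lemma's $\alpha_w$ as $\alpha_{vw}$, you should observe that the two families of divisibilities, each quantified over all $v,w\in A(n)$, are equivalent: substituting $w\mapsto v^{-1}w$ and using $\genby{v,v^{-1}w}=\genby{v,v^{-1}}+\genby{v,w}=\genby{v,w}$ turns one into the other. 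The paper makes this substitution implicitly in the converse direction, applying the hypothesis to the pair $(w,wv)$ so that $\genby{w,wv}=\genby{w,v}$ produces exactly the needed $\alpha_{wv}\mid\genby{w,v}\alpha_w$. With the equivalence stated once, your proof covers the lemma as written. Your preliminary fact characterizing membership in a geometric submodule is left implicit in the paper's ``by definition of $\alpha_{wv}$''; spelling it out, together with the $0\mid c\iff c=0$ convention, is a welcome clarification.
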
 
\begin{proof}
Suppose there is a geometric $\mathbb{Z}$-submodule $I$ satisfying $\alpha_w | \genby{v,w}\alpha_v$ for all $v,w \in A(n)$. To prove that $I$ is an ideal, we just need to prove that $[\alpha_ww, v] \in I$ for every $v \in A(n)$

By lemma's assumption, $\alpha_{wv} | \genby{w, wv} \alpha_w = \genby{w, v} \alpha_w$, so by definition of $\alpha_{wv}$, we must have $[\alpha_ww, v] = \alpha_w \genby{w, v} wv \in I$.

\vspace{2mm}
The other direction of the theorem is proved in a similar way.
\end{proof}

\begin{theorem}
For each finite set $Y$ of some freely homotopic classes on surface $S$ , there is an infinite class of nontrivial ideals that contain $Y$. 
\end{theorem}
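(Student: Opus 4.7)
The plan is to transfer the problem through the surjective Lie algebra homomorphism $\mathbf{Ab}\colon S\to L$ from \cref{thm1}, where $S=\mathbb{Z}[\hat{\pi_0}]$ and $L=\mathbb{Z}[A(n)]$. Preimages of distinct Lie ideals of $L$ are distinct Lie ideals of $S$, and any ideal of $L$ containing $\bar Y:=\mathbf{Ab}(Y)$ pulls back to an ideal of $S$ containing $Y$. Since each $\bar y=\mathbf{Re}(w)$ is a single group element of $A(n)\subseteq L$, the set $\bar Y$ is a finite subset of $A(n)$, and it suffices to produce an infinite family of distinct proper Lie ideals of $L$ containing $\bar Y$.

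Let $J_0$ be the Lie ideal of $L$ generated by $\bar Y$, and for each integer $N\ge 2$ set
\[
I_N \;:=\; J_0 + NL .
\]
Since $[L,NL]=N[L,L]\subseteq NL$, each $NL$ is an ideal of $L$, so every $I_N$ is an ideal of $L$ containing $\bar Y$, and $L/I_N\cong (L/J_0)\otimes_{\mathbb{Z}}\mathbb{Z}/N\mathbb{Z}$. The properness and distinctness of the $I_N$'s thus reduce to showing that $L/J_0$ has elements of arbitrarily large (finite or infinite) additive order; if this holds, then for all large $N$ the quotient $L/I_N$ has an element of order exactly $N$ while all its elements have order dividing $N$, so the maximal order in $L/I_N$ is $N$, distinguishing them pairwise and from $L$.

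Because every generator of $\bar Y$ is a single basis element and the bracket respects the grading $L=\bigoplus_{v\in A(n)}L_v$, the ideal $J_0$ is geometric in the sense of \cref{lm1}, with $J_0\cap L_v=\alpha_v\mathbb{Z}\,v$ for some $\alpha_v\in\mathbb{Z}_{\ge 0}$. The central, hardest step is to show $\{\alpha_v\}$ is unbounded (with the convention $\alpha_v=0$ read as infinite order). If every $\bar y_i$ equals the identity $1\in A(n)$, then every bracket involving a generator vanishes, so $J_0=L_1$ and $L/J_0=\bigoplus_{v\ne 1}L_v$ already has free $\mathbb{Z}$-summands, giving elements of infinite order. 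Otherwise, fix a non-identity $v_1\in\bar Y$; torsion-freeness of $A(n)$ gives $v_1^n\ne 1$ for $n\ne 0$. For $n$ large enough that $v_1^n\notin\bar Y$, any iterated bracket $[[\cdots[\bar y_i,u_1],\ldots],u_k]$ of homogeneous arguments landing in $L_{v_1^n}$ has overall grade $v_iu_1\cdots u_k=v_1^n$ and outermost bracket coefficient
\[
\genby{v_iu_1\cdots u_{k-1},\,u_k}\;=\;\genby{v_iu_1\cdots u_{k-1},\,v_1^n}\;=\;n\,\genby{v_iu_1\cdots u_{k-1},\,v_1},
\]
a multiple of $n$ by bilinearity of $\genby{\cdot,\cdot}$ and the identity $\genby{X,X^{-1}z}=\genby{X,z}$. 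Taking $\mathbb{Z}$-linear combinations preserves this divisibility, and no generator contributes to $L_{v_1^n}$ for this $n$, so $J_0\cap L_{v_1^n}\subseteq n\mathbb{Z}\,v_1^n$, forcing $\alpha_{v_1^n}\in n\mathbb{Z}_{\ge 1}\cup\{0\}$ and hence an element of order at least $n$ in $L/J_0$.

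Consequently, $\{\mathbf{Ab}^{-1}(I_N)\}$ for $N$ large gives the desired infinite family of distinct nontrivial ideals of $S$ each containing $Y$. The principal obstacle is the outermost-coefficient analysis for iterated brackets targeting $L_{v_1^n}$; its essential input is the bilinearity identity $\genby{u,v_1^n}=n\,\genby{u,v_1}$, applied at each outermost bracket step after reducing to homogeneous bracket arguments by $\mathbb{Z}$-linearity.
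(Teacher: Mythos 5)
Your proof is correct and reaches the same reduction step as the paper (pass to $\mathbb{Z}[A(n)]$ via $\mathbf{Ab}$ and find ideals containing $\bar Y$), but the construction of the infinite family is genuinely different. The paper constructs the ideals $I_K$ \emph{directly}: for $K\supseteq K_0$, it takes the geometric submodule with $\alpha$-function equal to $1$ on $K$ and $\gcd(i_1,\dots,i_n)$ elsewhere, verifies the ideal condition using \cref{lm1} together with the elementary divisibility $\gcd(w_1,\dots,w_n)\mid\genby{w,v}$, and gets infinitely many by letting $K$ grow, producing a strictly increasing chain. You instead generate the ideal $J_0=\genby{\bar Y}$, observe it is geometric because the bracket respects the $A(n)$-grading, show $\alpha_{v_1^n,J_0}\in n\mathbb{Z}\cup\{0\}$ for $n$ large via the outermost symplectic coefficient $\genby{\bar y_i u_1\cdots u_{k-1},u_k}=\genby{\cdot,v_1^n}=n\genby{\cdot,v_1}$, and then take $I_N=J_0+NL$, distinguished by the maximal additive order in $L/I_N$. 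Your route costs more: you must analyze the structure of the generated ideal $J_0$ and use the grading/iterated-bracket decomposition, whereas the paper only has to check one divisibility; but in return you get structural information about $\genby{\bar Y}$ (its sparseness along powers $v_1^n$) that the paper's explicit construction does not reveal.

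One small imprecision in your write-up: the intermediate reduction \lq\lq the properness and distinctness of the $I_N$'s reduce to $L/J_0$ having elements of arbitrarily large additive order'' is slightly too weak as stated --- e.g.\ $\bigoplus_p\mathbb{Z}/p\mathbb{Z}$ has elements of unbounded order yet $\bigl(\bigoplus_p\mathbb{Z}/p\mathbb{Z}\bigr)\otimes\mathbb{Z}/4\mathbb{Z}$ has no element of order $4$. What you actually need, and what your detailed outermost-coefficient argument delivers, is the sharper statement that $\alpha_{v_1^N,J_0}$ is divisible by $N$ (or zero) for all large $N$, so that $\gcd(\alpha_{v_1^N},N)=N$ and $L/I_N$ really does contain a $\mathbb{Z}/N\mathbb{Z}$ summand. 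It would be cleaner to state the reduction directly in terms of divisibility of the $\alpha$-invariants rather than element orders. With that fix your argument is complete.
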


\begin{proof}
Assume that $\mathbb{Z}[A(n)]$ is the Lie algebra corresponding to the surface $S$. We just need to prove that there is a class of non-trivial ideals of $\mathbb{Z}[A(n)]$ that contain $X$ for any finite subset $X$ of $A(n)$. Then the preimage of these ideals under the Lie algebra homomorphism $\mathbf{Ab}$ will be the ideals that we look for. Here $X = \mathbf{Ab}(Y)$

Each element of $X$ is combination of some finite element in $A(n)$. 

Since $X$ is finite, $X$ must be the subset of the $\mathbb{Z}$-submodule generated by \\$X_1 = \{a_1^{i_1}a_2^{i_2} \dotsc a_n^{i_n}| (i_1, i_2,\dotsc, i_n) \in K_0 \}$, for some finite subset $K_0 \subset \mathbb{Z} ^n$.

\vspace{2mm}
Now consider the geometric $\mathbb{Z}$-submodule $I_K$ generated by \\
$\{ \gamma_{i_1,i_2,\dotsc,i_n}a_1^{i_1}a_2^{i_2}\dotsc a_n^{i_n}$ such that $\gamma_{i_1,i_2,\dotsc,i_n} = 1$ if $(i_1,i_2,\dotsc,i_n) \in K$ and $\gamma_{i_1,i_2,\dotsc,i_n} = \gcd(i_1,i_2,\dotsc,i_n)$ otherwise $\}$. 

In this $\mathbb{Z}$-submodule $I_K$, $\alpha_{w, I_K} = \alpha_w = \gamma_{w_1, w_2,\dotsc, w_n}$ where $w = \prod\limits_{j=1}^n a_j^{w_j}$. 

Then $\alpha_w | \gcd(w_1, w_2,\dotsc, w_n) | \sum\limits_{j=1}^n w_j(\sum\limits_{i = 1}^n \genby{a_j, a_i}v_i)) = \genby{w, v} | \genby{w, v}\alpha_v$, 
for every $v = \prod\limits_{i = 1}^n a_i^{v_i}\in A(n)$. 

\vspace{2mm}
By \cref{lm1}, this submodule $I_K$ is an ideal of the Lie algebra $\mathbb{Z}[A(n)]$. 

Because $K_0$ is finite, there is an infinite number of $n$ -tuples $(i_1, i_2,\dotsc, i_n) \not \in K_0$ such that $\gcd(i_1,i_2,\dotsc, i_n) > 1$. 

Therefore, we have an infinite sequence of $n$ tuples $\{\alpha_j\}_{i = 1}^{\infty}$ so that $\alpha_j = (j_1, j_2, \dotsc, j_n) \not \in K_0$ with $\gcd(j_1, j_2,\dotsc, j_n) > 1$. Then we can choose $K_j= K_0 \cup \{\alpha_t\}_{t = 1}^j$ so that we have an infinite sequence of distinct ideals $\{I_{K_j}\}$ that contains $K_0$ and therefore contains $X$. 
\end{proof}

By looking at its fundamental polygon, it is easy to see that a closed surface with genus g has the corresponding $A(n) = A(2g) = \genby{a_1, a_2,\dotsc, a_{2g}}$ with $n = 2g$ and $\genby{a_{2i-1}, a_{2i}} = 1$ and all other $\genby{a_i, a_j} = 0$ for $i \leq j$. Using this fact, we will prove the following proposition:

\begin{prop} Consider the algebra $\mathbb{Z}[A(n)]$ with $n = 2g$ that is associated with the closed surface $S$ with genus $g$. A geometric submodule $M$ of $\mathbb{Z}[A(n)]$ is an ideal if and only if for every $(i_1,\dotsc, i_{2g})$ and $(k_1,\dotsc,k_{2g}) \in \mathbb{Z}^{2g}$, $\alpha_M(k_1,k_2,\dotsc,k_{2g})$  divides $ \alpha_M(i_1,i_2,\dotsc,i_{2g}) \cdot \gcd\limits_{t = \overline{1, s}}(\gcd(k_{2t-1},k_{2t}) \cdot \gcd(i_{2t-1},i_{2t}))$.
\end{prop}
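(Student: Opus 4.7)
The plan is to use the criterion from \cref{lm1}, which states that a geometric submodule $M$ is an ideal if and only if $\alpha_w \mid \genby{v,w}\alpha_v$ for every $v,w \in A(n)$, and to reformulate this criterion using the explicit symplectic structure of the closed surface of genus $g$. Since $\genby{a_{2t-1}, a_{2t}} = 1$ and all other basis pairings vanish, for $v = \prod_j a_j^{i_j}$ and $w = \prod_j a_j^{k_j}$ the pairing expands as
\[
\genby{v,w} = \sum_{t=1}^{g}\bigl(i_{2t-1}k_{2t} - i_{2t}k_{2t-1}\bigr).
\]
Throughout I will abbreviate $d_{v,t} = \gcd(i_{2t-1}, i_{2t})$, $d_{w,t} = \gcd(k_{2t-1}, k_{2t})$, and $D = \gcd_{1 \le t \le g}(d_{v,t}\,d_{w,t})$.

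For the direction ``the divisibility in the proposition implies $M$ is an ideal,'' I would first observe that in each block $d_{v,t}\,d_{w,t}$ divides $i_{2t-1}k_{2t}-i_{2t}k_{2t-1}$, since $d_{v,t}$ divides both $i_{2t-1},i_{2t}$ and $d_{w,t}$ divides both $k_{2t-1},k_{2t}$. Taking $\gcd$ over $t$ gives $D \mid \genby{v,w}$, so the hypothesized $\alpha_w \mid \alpha_v D$ yields $\alpha_w \mid \alpha_v\genby{v,w}$ by transitivity. \Cref{lm1} then delivers that $M$ is an ideal.

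For the converse direction, assume $M$ is an ideal, so \cref{lm1} provides $\alpha_w \mid \alpha_{v^*}\genby{v^*,w}$ for every $v^* \in A(n)$. Fix a pair $v,w$ and proceed block by block: for each $t$, I plan to construct an auxiliary element $v_t^*$ supported on the symplectic pair $\{a_{2t-1}, a_{2t}\}$ whose pairing with $w$ is a unit multiple of $d_{v,t}\,d_{w,t}$, using Bezout identities on $(i_{2t-1}, i_{2t})$ and $(k_{2t-1}, k_{2t})$ (the claim is cleanest when $d_{v,t}d_{w,t}\ne 0$; the degenerate cases are handled trivially since then the corresponding product vanishes and drops out of the $\gcd$). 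Inserting $v_t^*$ into \cref{lm1} gives $\alpha_w \mid d_{v,t}\,d_{w,t}\,\alpha_{v_t^*}$; combining over $t$ via the $\gcd$ then produces $\alpha_w \mid \alpha_v D$, provided one can guarantee $\alpha_{v_t^*} \mid \alpha_v$ for each $t$.

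The main obstacle is this last ingredient: controlling $\alpha_{v_t^*}$ in terms of $\alpha_v$. Because the function $x \mapsto \alpha_x$ on a geometric submodule is neither obviously monotone nor multiplicative in the monomial $x$, this requires a separate argument. My plan to obtain the bound is to perform a second application of \cref{lm1} — for instance, to the pair $(v\cdot(v_t^*)^{-1}, w)$ inside the abelian group $A(n)$ — or, more directly, to exhibit $\alpha_v v_t^*$ as a $\mathbb{Z}$-linear combination of brackets of the generator $\alpha_v v$ with suitable monomials in $A(n)$, thereby forcing $\alpha_{v_t^*}\mid\alpha_v$. Once this auxiliary bound is in place, the proposition follows by assembling the $g$ blockwise divisibilities.
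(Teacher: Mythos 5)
Your easy direction (proposition's divisibility $\Rightarrow$ ideal) is correct and matches the paper: each block product $d_{v,t}d_{w,t}$ divides $i_{2t-1}k_{2t}-i_{2t}k_{2t-1}$, hence $D \mid \genby{v,w}$, and \cref{lm1} finishes it.

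For the converse you have correctly identified the right shape of the reduction — get $\alpha_w \mid d_{v,t}d_{w,t}\alpha_v$ block by block and then take the $\gcd$ over $t$ — but the argument as written has a genuine gap exactly where you flagged it: you never establish $\alpha_{v_t^*}\mid\alpha_v$, and neither of your two suggested fixes is carried out. The second one in particular does not do what you hope: bracketing the generator $\alpha_v v$ with $u$ produces $\alpha_v\genby{v,u}\,vu$, so to land on a multiple of $v_t^*$ you must take $u = v^{-1}v_t^*$, which only yields $\alpha_{v_t^*} \mid \alpha_v\genby{v,v_t^*}$, not $\alpha_{v_t^*}\mid\alpha_v$; the extra factor $\genby{v,v_t^*}$ is not a unit and has already been committed by your Bezout normalization $\genby{v_t^*,w}=d_{v,t}d_{w,t}$, so you cannot simultaneously force it to be $\pm 1$.

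The paper avoids this entirely by taking $v_t^*$ to be the unit monomials $e=a_{2t-1}$ and $e'=a_{2t}$ and, crucially, applying \cref{lm1} with the unit monomial in \emph{both} slots. From $\alpha_e\mid \genby{e',e}\alpha_{e'}=\alpha_{e'}$ and vice versa one gets $\alpha_e=\alpha_{e'}$; from \cref{lm1} with $v$ arbitrary and $w=e,e'$ one gets $\alpha_e\mid i_{2t}\alpha_v$ and $\alpha_e\mid i_{2t-1}\alpha_v$, hence $\alpha_e\mid d_{v,t}\alpha_v$; and from \cref{lm1} with $v=e,e'$ and $w$ arbitrary one gets $\alpha_w\mid k_{2t}\alpha_e$ and $\alpha_w\mid k_{2t-1}\alpha_e$, hence $\alpha_w\mid d_{w,t}\alpha_e$. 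Chaining these gives $\alpha_w\mid d_{w,t}d_{v,t}\alpha_v$ with no Bezout construction and no auxiliary bound to prove. In short: you should pivot through the unit monomials rather than a Bezout witness, because only for the unit monomial do the two applications of \cref{lm1} (once in each slot) close the loop and supply the control on the intermediate $\alpha$ that your approach leaves hanging.
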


\begin{proof}
We prove the first direction of the theorem by assuming that $M$ is an ideal. By identifying $a_i$ with edges of the fundamental polygon of closed surface $S$, we have $\genby{a_{2i-1}, a_{2i}} = 1$ for all $i = \overline{1,n}$ and $\genby{a_i, a_j} = 0$ for other $i \leq j$.

Using \cref{lm1}, we have: \begin{equation} \label{eq:1}
\alpha_M(k_1,k_2,\dotsc,k_{2g}) | \alpha_M(i_1,i_2,\dotsc,i_{2g})\sum\limits_{t = 1}^g (i_{2t-1}k_{2t}-k_{2t-1}i_{2t}).
\end{equation} 
From equation $\hyperref[eq:1]{1}$, we have: $\alpha_M(1,0,\dotsc,0)$ divides $i_2 \alpha_M(i_1,i_2,\dotsc,i_{2g})$. 

Then $\alpha_M(1, 0, \dotsc, 0)$ divides $\alpha_M(0, 1,\dotsc, 0)$. Similarly, $\alpha_M(0,1,\dotsc,0)$ divides $\alpha_M(1,0,\dotsc,0)$ and therefore $\alpha_M(1, 0,\dotsc, 0) = \alpha_M(0, 1, \dotsc, 0)$. 

Then, again by equation $\hyperref[eq:1]{1}$, we got $\alpha_M(1, 0,\dotsc, 0) = \alpha_M(0, 1, \dotsc, 0)$ divides $i_1 \alpha_M(i_1,i_2,\dotsc,i_{2g})$. Therefore, $\alpha_M(0,1,\dotsc,0)$ divides $\gcd(i_1, i_2) \alpha_M(i_1,i_2,\dotsc,i_{2g})$\\

From equation $\hyperref[eq:1]{(1)}$, we also have $\alpha_M(k_1,k_2,\dotsc,k_{2g})$ divides $k_2\alpha_M(1,0,\dotsc,0)$ and $k_1\alpha_M(0,1,\dotsc,0)$. So $\alpha_M(k_1,k_2,\dotsc,k_{2g})$ divides $\gcd(k_1, k_2) \alpha_M(1,0,\dotsc,0)$.\\

Therefore,
\begin{equation*}
\alpha_M(k_1,k_2,\dotsc,k_{2g}) | \gcd(k_1, k_2)\alpha_M(1,0,\dotsc,0)| \gcd(k_1,k_2)  \gcd(i_1,i_2) \cdot \alpha_M(i_1,i_2,\dotsc,i_{2g}) 
\end{equation*}

Similarly, we can prove that, and so we finish the proof for the first direction.\\

The other direction that if $\alpha_M(k_1,k_2,\dotsc,k_{2g})$ divides \\ $\alpha_M(i_1,i_2,\dotsc,i_{2g}) \cdot \gcd\limits_{t = \overline{1, s}}(\gcd(k_{2t-1},k_{2t}) \cdot \gcd(i_{2t-1},i_{2t}))$, then $M$ is an ideal is obvious from \cref{lm1}.
\end{proof}

Similarly, using \cref{lm6}, we have a similar proposition for surface with boundary

\begin{prop} Consider the Lie algebra $\mathbb{Z}[A(n)]$ with $n = 2g+b-1$ that is associated with the surface $S$ which has genus $g$ and $b$ boundary components. A geometric submodule $M$ of $\mathbb{Z}[A(n)]$ is an ideal if and only if for every $(i_1,\dotsc, i_{n})$ and $(k_1,\dotsc,k_{n}) \in \mathbb{Z}^{n}$, $\alpha_M(k_1,k_2,\dotsc,k_{n})$  divides $ \alpha_M(i_1,i_2,\dotsc,i_{n}) \cdot \gcd\limits_{t = \overline{1, g}}(\gcd(k_{2t-1},k_{2t}) \cdot \gcd(i_{2t-1},i_{2t}))$.
\end{prop}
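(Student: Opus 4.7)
The plan is to mirror the proof of the previous proposition, exploiting that the extra generators coming from the boundary components are invisible to the symplectic form. By \cref{lm6}, the pairing on the generators $a_1,\ldots,a_{2g+b-1}$ satisfies $\langle a_{2t-1},a_{2t}\rangle=1$ for $t=1,\ldots,g$ and $\langle a_i,a_j\rangle=0$ for all other pairs with $i\leq j$; in particular the last $b-1$ coordinates contribute nothing. So for $v=\prod_j a_j^{i_j}$ and $w=\prod_j a_j^{k_j}$ we have $\langle v,w\rangle=\sum_{t=1}^{g}(i_{2t-1}k_{2t}-k_{2t-1}i_{2t})$, and the boundary indices $i_{2g+1},\ldots,i_n$ and $k_{2g+1},\ldots,k_n$ are carried along as inert parameters throughout the whole argument.

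For the forward direction, assume $M$ is an ideal. By \cref{lm1},
\begin{equation}\label{eq:bdyprop}
\alpha_M(k_1,\ldots,k_n)\ \Big|\ \alpha_M(i_1,\ldots,i_n)\sum_{t=1}^{g}\bigl(i_{2t-1}k_{2t}-k_{2t-1}i_{2t}\bigr).
\end{equation}
I fix a single handle-index $t\in\{1,\ldots,g\}$ and plug into \eqref{eq:bdyprop} exponent vectors supported on coordinates $2t-1$ and $2t$ only, repeating the three-step calculation from the closed-surface proof: first specialize to deduce $\alpha_M(e_{2t-1})=\alpha_M(e_{2t})$, where $e_j$ is the $j$-th standard unit vector; next use \eqref{eq:bdyprop} twice more to conclude that this common value divides $\gcd(i_{2t-1},i_{2t})\,\alpha_M(i_1,\ldots,i_n)$ for arbitrary $i$; and finally derive $\alpha_M(k_1,\ldots,k_n)\mid\gcd(k_{2t-1},k_{2t})\,\alpha_M(e_{2t})$. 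Composing these divisibilities for the fixed $t$ gives $\alpha_M(k)\mid\gcd(k_{2t-1},k_{2t})\gcd(i_{2t-1},i_{2t})\,\alpha_M(i)$, and taking the gcd over $t=1,\ldots,g$ yields the desired statement.

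For the backward direction, suppose the stated divisibility holds and write $D=\gcd_{t=1,\ldots,g}\bigl(\gcd(k_{2t-1},k_{2t})\gcd(i_{2t-1},i_{2t})\bigr)$. A direct factoring shows that $\gcd(k_{2t-1},k_{2t})\gcd(i_{2t-1},i_{2t})$ divides $i_{2t-1}k_{2t}-k_{2t-1}i_{2t}$ for each $t$, so $D$ divides each summand in the right-hand side of \eqref{eq:bdyprop} and therefore divides the whole sum. Combined with $\alpha_M(k)\mid\alpha_M(i)\cdot D$ this gives exactly the hypothesis of \cref{lm1}, so $M$ is an ideal. The only mildly subtle point, and effectively the sole obstacle, is recognizing that the correct uniform statement must involve a $\gcd$ over handles rather than a sum or product: a common divisor of every summand automatically divides the sum, which is precisely what makes the $\gcd$ formulation self-consistent on both implications.
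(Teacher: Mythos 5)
Your proof is correct and takes essentially the same approach the paper intends: the paper only remarks that Proposition~2 follows ``similarly'' from \cref{lm1} and \cref{lm6}, and you carry out exactly that argument, specializing \cref{lm1} to the intersection form $\langle a_{2t-1},a_{2t}\rangle=1$ ($t\le g$), all else zero, from \cref{lm6}, and then running the unit-vector substitutions handle by handle. You also make explicit the step the paper elides with ``Similarly'' (doing the three-step calculation for each $t$ and then taking the gcd over $t$), which is a welcome clarification rather than a deviation.
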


\section{Classification of symmetric ideals of $\mathbb{Q}$-module of freely homotopic classes on compact orientable surface} \label{st3}
\begin{defi}\label{def6}
For a surface $S$, consider the Lie algebra $\mathbb{Q}[A(n)]$, with the correponding abelianization of $\pi_1(S)$, $A(n)$, which is defined in \cref{def2}. Consider the surjective Lie algebra homomorphism $\mathbf{Ab}_{\mathbb{Q}}$. A ideal $I$ of $\mathbb{Q}[\hat{\pi_0}]$ is $\mathbf{symmetric}$ if it is the preimage of some ideal in $\mathbb{Q}[A(n)]$ under the map $\mathbf{Ab}_\mathbb{Q}$.
\end{defi}

Now we consider the surface $S$, and suppose the abelianization of its fundamental group, $A(n)$, is generated by $a_1, a_2, \dotsc, a_n$. We will find all ideals of the Lie algebra $\mathbb{Q}[A(n)]$, and, therefore, all symmetric ideals of the Goldman algebra $\mathbb{Q}[\hat{\pi_0}]$ on surface $S$

\begin{defi}	
Let $C_S = \{x \in A(n) |\ \genby{x, y} = 0 \ \forall y \in A(n) \} = \{x \in A(n)|\  \genby{x, a_i} = 0 \ \forall i = \overline{1,n} \}$ be the center of $\mathbb{Q}[A(n)]$.
\end{defi}
\begin{defi}
Let $A$ be the matrix with entries $A_{ji} = \genby{a_i, a_j}$ for ${i, j = \overline{1,n}}$. For $x = \displaystyle\prod_{i=1}^n a_i^{x_i} \in A_n$, let $X =(x_1, x_2,\dotsc, x_n) \in \mathbb{Z}^n \subset \mathbb{Q}^n$, and let 
\begin{equation*}
M(x) = A.X = (M_1(x),M_2(x),M_3(x),\dotsc,M_n(x)) \in \mathbb{Z}^n \subset \mathbb{Q}^n, 
\end{equation*}
where $M_j(x) = \displaystyle\sum_{i=1}^n\genby{a_i, a_j} x_i$

\vspace{2mm}
Then we have:
\begin{equation*}
\genby{x, y} = \displaystyle\sum_{i, j = 1}^n \genby{a_i, a_j}x_iy_j = \displaystyle\sum_{j=1}^n y_j(\displaystyle\sum_{i=1}^n\genby{a_i, a_j}x_i) = \displaystyle\sum_{j=1}^n y_j M_j(x) = Y . M(x)
\end{equation*}
, for $x = \displaystyle\prod_{i=1}^n a_i^{x_i}, y = \displaystyle\prod_{i=1}^n a_i^{y_i}$
\end{defi}
Here $Y . M(x)$ is normal dot product between $2$ vectors $Y$ and $M(x)$ of the vector space $\mathbb{Q}^n$.

\begin{lem}\label{lm2}
For $x, y \in A_n$, there is no $v \in A_n$ such that $\genby{x, v} \neq 0$ and $\genby{y, v} = 0$ if and only if $x^{k_1} = cy^{k_2}$ for some $c \in C_S, k_1 \neq 0$, and $k_2 \in \mathbb{Z}$.
\end{lem}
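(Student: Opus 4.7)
The plan is to translate the statement into linear algebra using the assignment $x \mapsto X \in \mathbb{Z}^n$ and the vector $M(x) = AX$ already introduced. Since $\langle x, v \rangle = V \cdot M(x)$, the condition ``there is no $v \in A_n$ with $\langle x, v \rangle \neq 0$ and $\langle y, v \rangle = 0$'' becomes the lattice inclusion $L := \{V \in \mathbb{Z}^n : V \cdot M(y) = 0\} \subseteq \{V \in \mathbb{Z}^n : V \cdot M(x) = 0\}$. The whole proof then reduces to showing this inclusion is equivalent to $M(x)$ and $M(y)$ being $\mathbb{Q}$-linearly dependent, and then lifting that dependence back to an identity $x^{k_1} = c y^{k_2}$ in $A_n$.

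For the forward direction, I would first dispose of the degenerate case $M(y) = 0$: then $y \in C_S$, every $v$ satisfies $\langle y, v \rangle = 0$, so by hypothesis every $v$ satisfies $\langle x, v \rangle = 0$, giving $x \in C_S$; taking $k_1 = 1$, $k_2 = 0$, $c = x$ finishes this case. When $M(y) \neq 0$, the sublattice $L$ has rank $n-1$ and $\mathbb{Q}$-spans the hyperplane $M(y)^\perp \subset \mathbb{Q}^n$ (since $L$ is cut out by a single nonzero linear functional on $\mathbb{Z}^n$, it is saturated). Extending the inclusion $L \subseteq M(x)^\perp \cap \mathbb{Z}^n$ by $\mathbb{Q}$-linearity gives $M(y)^\perp \subseteq M(x)^\perp$ in $\mathbb{Q}^n$, which forces $M(x) \in \mathbb{Q} \cdot M(y)$. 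Writing this dependence as $k_1 M(x) = k_2 M(y)$ with $k_1, k_2 \in \mathbb{Z}$ and $k_1 \neq 0$, additivity of $M$ gives $M(x^{k_1} y^{-k_2}) = 0$, so $c := x^{k_1} y^{-k_2}$ lies in $C_S$ and $x^{k_1} = c y^{k_2}$.

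For the converse, suppose $x^{k_1} = c y^{k_2}$ with $c \in C_S$ and $k_1 \neq 0$. Using the identity $\langle wv, u \rangle = \langle w, u \rangle + \langle v, u \rangle$ from \cref{def4} and the fact that $\langle c, v \rangle = 0$ for any $v$, we get $k_1 \langle x, v \rangle = \langle x^{k_1}, v \rangle = \langle c y^{k_2}, v \rangle = k_2 \langle y, v \rangle$ for every $v \in A_n$. Hence if $\langle y, v \rangle = 0$ then $k_1 \langle x, v \rangle = 0$, and since $k_1 \neq 0$ this forces $\langle x, v \rangle = 0$, ruling out the forbidden $v$.

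The only genuinely nontrivial step is the lift from the integer-lattice inclusion to the $\mathbb{Q}$-span condition on $M(x), M(y)$; this is where the saturation of $L$ (as the kernel of an integer linear functional) is essential, and will be the one place where I need to be careful rather than appealing only to formal bilinearity. Everything else is a direct unwinding of the definitions of $M$ and $C_S$.
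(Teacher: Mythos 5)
Your proof takes essentially the same linear-algebra route as the paper's: translate the hypothesis into an inclusion $M(y)^\perp \subseteq M(x)^\perp$ over $\mathbb{Q}^n$, deduce $\mathbb{Q}$-linear dependence of $M(x)$ and $M(y)$, and pull back via additivity of $M$ to $x^{k_1}y^{-k_2} \in C_S$. You add welcome care at exactly the points the paper elides --- the degenerate case $M(y)=0$, the passage from the $\mathbb{Z}^n$-lattice inclusion to the $\mathbb{Q}^n$-hyperplane inclusion (the paper asserts it without discussing saturation), and the converse direction, which the paper dismisses as trivial --- but the core argument is identical.
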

\begin{proof}
Firstly, if there is no $v \in A_n$ such that $\genby{x, v} \neq 0$ and $\genby{y, v} = 0$, then $\genby{x, v} = 0$ whenever $\genby{y, v} = 0$. But this means that for every $V \in \mathbb{Z}^n$ and $V . M(y) = 0$, $V . M(x) = 0$. Hence, the subspace $M(y)^{\bot}$ of $\mathbb{Q}^n \subset$ the subspace $M(x)^{\bot}$ of $\mathbb{Q}^n$. If $M(x)$ is non-zero vector in $\mathbb{Q}^n$, then $M(y)$ is also a non-zero vector, and both of the subspaces of $\mathbb{Q}^n$, $M(x)^{\bot}$ and $M(y)^{\bot}$, have the same $(n-1)$ dimension. Hence they must be equal and therefore also equal to the subspace $\{M(x), M(y)\}^{\bot}$. Then the set $\{M(x), M(y)\}$ must be linearly dependent in $\mathbb{Q}^n$. 

Hence, $M(x) = kM(y)$ for some $k \in \mathbb{Q}$, or $k_1M(x) = k_2M(y)$ for some $k_1 \neq 0, k_2 \in \mathbb{Z}$. Therefore, $Ak_1X = Ak_2Y$ or $(k_1X - k_2Y) \in KerA$, which means that $M(x^{k_1}y^{-k_2}) = 0 \in \mathbb{Q}^n$. Let $c = x^{k_1}y^{-k_2}$. Then $\genby{c, y} = Y.M(c) = y.0 = 0$ for every $y \in G$, and therefore $c \in C$, completing the proof of \cref{lm4}. (Since the other direction is trivial)
\end{proof}

\begin{defi}
For a finite sequence of fixed distinct $k$ elements of $C_S$, $\{c_i\}_{i=1}^k$, and a finite sequence of non-zero rational numbers, $\{q_i\}_{i=1}^k$, we define $X_{\alpha}$ as $\{ \sum_{i=1}^k q_ic_ix, x \in A_n \setminus C_S \}$, where $\alpha = \alpha(c_1, \dotsc, c_k, q_1, \dotsc, q_k)$ depends only on ${c_i}$ and ${q_i}$.
\end{defi}

\begin{lem}\label{lm3}
Let $\{q_i\}_{i = 1}^k$ be a fixed sequence of non-zero rational numbers, and $\{c_i\}_{i = 1}^k$ be a sequence of $k$ distinct elements in $C_S$. Denote $f(x) = \sum_{i = 1}^k q_ic_ix$, and so $X_{\alpha} = X_{\alpha(c_1, \dotsc, c_k, q_1, \dotsc, q_k)}=\{f(x) | x \in A_n \setminus C_S \}$. Now suppose $I$ is an ideal of the Lie algebra $\mathbb{Q}[A(n)]$. Then if $I$ contains some element of the form $f(x) \in X_{\alpha}$ ($x \in A_n \setminus C_S$), then $X_{\alpha} \subset I$.

As a result, in the case for closed surface $S$, where $C_S = \{e\}$, if $I$ contains some $x \in A_n \setminus \{e\}$, then $A_n \setminus \{e\} \subset I$. (Simply because in this case $X_{\alpha} = A_n$ for all $\alpha$)
\end{lem}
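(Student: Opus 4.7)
The plan is to exploit the Lie bracket on $\mathbb{Q}[A(n)]$ to generate new elements of $X_\alpha$ from the given one. The enabling computation is a formula for $[f(x), v]$ when $v \in A_n$: because each $c_i \in C_S$, bilinearity of the symplectic product gives $\genby{c_i x, v} = \genby{c_i, v} + \genby{x, v} = \genby{x, v}$, so
\begin{equation*}
[f(x), v] \;=\; \sum_{i=1}^k q_i\, \genby{c_i x, v}\, c_i x v \;=\; \genby{x, v}\, f(xv).
\end{equation*}
Hence whenever $\genby{x, v} \neq 0$, dividing by this nonzero rational places $f(xv)$ in $I$.

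Fix an arbitrary target $y \in A_n \setminus C_S$, and first attempt to reach $f(y)$ in a single bracket by taking $v = yx^{-1} \in A_n$. Antisymmetry gives $\genby{x, x} = 0$, hence $\genby{x, yx^{-1}} = \genby{x, y}$, and abelianness gives $x(yx^{-1}) = y$. So if $\genby{x, y} \neq 0$, the single bracket $[f(x), yx^{-1}] = \genby{x, y}\, f(y)$ already shows $f(y) \in I$.

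The main obstacle is the case $\genby{x, y} = 0$, which I would handle by passing through an intermediate $z \in A_n$ with both $\genby{x, z} \neq 0$ and $\genby{z, y} \neq 0$; applying the previous step twice then moves $f(x) \to f(z) \to f(y)$ inside $I$. To secure such a $z$, note that $x^\perp = \{w \in A_n : \genby{x, w} = 0\}$ and $y^\perp$ are subgroups of $A_n$ by bilinearity, and each is proper because $x, y \notin C_S$. By the standard fact that no group is the union of two proper subgroups, $A_n \setminus (x^\perp \cup y^\perp)$ is nonempty, and any element $z$ in it does the job (and automatically lies outside $C_S$, since $\genby{x, z} \neq 0$).

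The closed-surface consequence drops out as a specialization: when $C_S = \{e\}$, taking $k = 1$, $c_1 = e$, $q_1 = 1$ makes $f$ the identity on $A_n$ and $X_\alpha = A_n \setminus \{e\}$, so the lemma reduces at once to the stated remark.
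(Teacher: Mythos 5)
Your proof is correct and follows the same overall strategy as the paper's: compute $[f(x), v] = \genby{x, v}\, f(xv)$ (using $\genby{c_i x, v} = \genby{x, v}$ since $c_i \in C_S$) and then chain brackets through an intermediate $z$ to reach any target $y \in A_n \setminus C_S$. Where you diverge is in how the intermediate is produced. The paper's proof (Sublemmas~\ref{sl1} and~\ref{sl2}) first routes through the coordinate generators: it picks indices $i, j$ with $M_i(x_0) \neq 0$ and $M_j(y) \neq 0$, passes $f(x_0) \to f(a_i) \to f(z) \to f(a_j) \to f(y)$, and obtains $z$ by choosing a point of $\mathbb{Q}^n$ outside the union of the two hyperplanes $r_i^\perp \cup r_j^\perp$ determined by rows of the matrix $A$. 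You skip the coordinate machinery entirely: you observe that $x^\perp$ and $y^\perp$ are proper subgroups of the abelian group $A_n$ (proper precisely because $x, y \notin C_S$), invoke the classical fact that a group is never the union of two proper subgroups, and get $z$ in one step, so the chain is the shorter $f(x) \to f(z) \to f(y)$. Your argument is more economical and self-contained for this lemma; the paper's introduction of $M(x) = AX$ and the matrix $A$ is heavier here, but that apparatus is reused in \cref{lm2} and the classification theorem, so the setup cost is amortized across the section. Your reduction of the closed-surface case (taking $k=1$, $c_1 = e$, $q_1 = 1$ so that $f = \mathrm{id}$) is also correct.
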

\begin{proof}

\begin{sublem} \label{sl1}
If $f(x) \in I$, and $\genby{x, y} \neq 0$, then $f(y) \in I$
\end{sublem}
\begin{proof}
Suppose that $f(x) \in I$.
\begin{equation*}
[f(x), x^{-1}y] = \sum\limits_{i = 1}^k q_i \genby{c_ix, x^{-1}y} c_iy = \genby{x, y} (\sum\limits_{i = 1}^k q_ic_iy) = \genby{x, y} f(y) \in I
\end{equation*}
because $\genby{c_ix, x^{-1}y} = \genby{c_i, x^{-1}y} + \genby{x, x^{-1}y} = 0 + \genby{x, x^{-1}} + \genby{x, y} = \genby{x, y} \neq 0$. 

Hence $f(y)$ is also in $I$. 
\end{proof}

\begin{sublem}\label{sl2}
Suppose that $f(x) \in I$ with $M_i(x) \neq 0$ for some $i$. Then for every $z \in A_n$ with $M_i(z) \neq 0$, $f(z)$ also $\in I$.
\end{sublem}
\begin{proof}
Take $y_0 \in A_n$ with $Y_0 = (0,\dotsc,0, 1,0,\dotsc,0)$, with $0$ is in $i$th position.

We have $\genby{x, y_0} = Y.M(x) = M_i(x) \neq 0$. By \cref{sl1}, $f(y_0)$ is also $\in I$. Now, for any $z \in A_n$ with $M_i(z) \neq 0$, because $y_0 \in I$, and $\genby{y_0, z} = -\genby{z, y_0} = -Y_0.M(z) = -M_i(z) \neq 0$, $f(z)$ also $\in I$ by \cref{sl1}.
\end{proof}

Now come back to the proof of \cref{lm3}. By the lemma's assumption, there is some $x_0 \not\in C_S$: $f(x_0) \in I$. Since $x_0 \not\in C_S$, there exists $i$ such that $M_i(x_0) \neq 0$. 

Now suppose that $x$ is any element of $A_n$ that is not in $C_S$. Then $M_j(x) \neq 0$ for some $j$. We will prove that $f(x) \in I$

Assume $r_i$ is ith row of matrix $A$. Then $M_i(z) = r_i.Z$. Since $M_i(x), M_j(y) \neq 0$, $r_i$ and $r_j$ must be different from zero vectors. Therefore, $r_i^{\bot}$ and $r_j^{\bot}$ are two subspace of $\mathbb{Q}^n$ with dimension at most $n-1$. Hence there is some $Z \in \mathbb{Q}^n$ that is outside the union of the two subspaces. Choose $z = \prod\limits_{i = 1}^n a_i^{z_i} \in A_n$, with $Z = (z_1, z_2, \dotsc, z_n)$. So we have $M_i(z) = r_i.Z$, and $M_j(z) = r_j.Z$ are both non-zero vectors of $\mathbb{Q}^n$. 

Since $f(x_0) \in I$, and $M_i(x_0), M_i(z) \neq 0$, we have $f(z) \in I$ by \cref{sl2}. Since $f(z) \in I$, and $M_j(z), M_j(x) \neq 0$, we get $f(x) \in I$ by \cref{sl2} again. Therefore $f(x) \in I$ for every $x \in A_n \setminus C_S$, and so $X_{\alpha} \subset I$.
\end{proof}

\begin{theorem}
Every ideal $I$ of the Lie algebra $\mathbb{Q}[A(n)]$ on closed (compact) surface $S$ is $\mathbb{Q}[\{e\}]$ or $\mathbb{Q}[A_n \setminus \{e\}]$ or $\mathbb{Q}[A_n]$. Here $\mathbb{Q}[X]$ is the $\mathbb{Q}$-submodule of $\mathbb{Q}[A(n)]$ generated by $X$
\end{theorem}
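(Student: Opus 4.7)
The plan is to split into cases based on whether $I\subseteq\mathbb{Q}[\{e\}]$. If so, $I$ is a $\mathbb{Q}$-subspace of the one-dimensional space $\mathbb{Q}\cdot e$, so $I$ is either $\{0\}$ or $\mathbb{Q}[\{e\}]$. Otherwise $I$ contains some element with non-trivial support off the identity, and the goal is to extract a pure monomial $x_j\in I$ with $x_j\neq e$. Once I have such a monomial, the remark at the end of \cref{lm3} (which applies because $S$ being closed forces $C_S=\{e\}$) immediately yields $A_n\setminus\{e\}\subseteq I$, so $I$ must be one of the listed ideals.

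To extract a monomial, take $v = q_0 e + \sum_{i=1}^{m} q_i x_i\in I$ with $x_i\in A_n\setminus\{e\}$ distinct, $q_i\neq 0$, and $m\geq 1$. Non-degeneracy of the symplectic form for closed surfaces guarantees that each $x_i$ admits some $y$ with $\langle x_i,y\rangle\neq 0$; choosing $y\in A_n$ outside the finite union of the rational hyperplanes $\{Y:\langle X_i,Y\rangle=0\}$ and $\{Y:\langle X_i-X_j,Y\rangle=0\}$ arranges that the scalars $\lambda_i:=\langle x_i,y\rangle$ are simultaneously non-zero and pairwise distinct. An induction on $k$ using the bilinearity of $\langle\cdot,\cdot\rangle$ and $\langle y,y\rangle=0$ gives $\operatorname{ad}_y^{k}(v)=\sum_{i=1}^m q_i\lambda_i^{k} x_iy^{k}$ for $k\geq 1$ (the $q_0 e$ term is killed by the first bracket since $\langle e,y\rangle=0$). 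Then, using $\langle x_iy^k,y^{-k}\rangle=-k\lambda_i$ (here $\langle y^k,y^{-k}\rangle=0$),
\[
\operatorname{ad}_{y^{-k}}\operatorname{ad}_y^{k}(v) \;=\; -k\sum_{i=1}^{m} q_i\lambda_i^{k+1} x_i \;\in\; I, \qquad k\geq 1.
\]

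Setting $u_k:=-\tfrac{1}{k}\operatorname{ad}_{y^{-k}}\operatorname{ad}_y^{k}(v)\in I$ for $k=1,\ldots,m$, all the $u_k$ have support inside $\{x_1,\ldots,x_m\}$, and their coefficient matrix $(\lambda_i^{k+1})_{k,i=1}^m$ factors as a standard Vandermonde in $\lambda_1,\ldots,\lambda_m$ times $\operatorname{diag}(\lambda_1^2,\ldots,\lambda_m^2)$, which is invertible by the choice of $y$. Solving this $m\times m$ linear system over $\mathbb{Q}$ isolates each $q_jx_j\in I$, hence $x_j\in I$ after dividing by $q_j$. Applying the remark of \cref{lm3} we conclude $A_n\setminus\{e\}\subseteq I$. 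Finally, since $v\in I$ and $\sum_{i=1}^m q_ix_i\in I$, the difference $q_0 e$ lies in $I$; so $I=\mathbb{Q}[A_n]$ if some element of $I$ has a non-zero $e$-component, and $I=\mathbb{Q}[A_n\setminus\{e\}]$ otherwise.

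The main obstacle is assembling a Vandermonde system fine enough to separate every $x_i$. The obvious double bracket $\operatorname{ad}_{y^{-1}}^{k}\operatorname{ad}_y^{k}$ only produces coefficients $\lambda_i^{2k}$, which cannot distinguish an inverse pair $x_i,x_i^{-1}\in\operatorname{supp}(v)$ because there $\lambda_i=-\lambda_{i'}$ is forced. Replacing the iterated bracket with the single bracket $\operatorname{ad}_{y^{-k}}$ produces the odd-power coefficient $-k\lambda_i^{k+1}$ via $\langle x_iy^k,y^{-k}\rangle=-k\lambda_i$, which runs a Vandermonde in $\lambda_i$ itself and requires only distinctness of the $\lambda_i$ (always achievable generically because $x_i\neq x_j$ gives $X_i-X_j\neq 0$). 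The other, minor, technical point is the existence of a generic integer $y$ avoiding a finite union of rational hyperplanes, which holds because $\mathbb{Z}^n$ is not covered by finitely many proper $\mathbb{Q}$-subspaces.
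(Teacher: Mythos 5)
Your proof is correct, and the route is genuinely different from the paper's. The paper picks an element $\gamma_0 = \sum q_i u_i \in I \setminus \mathbb{Q}[\{e\}]$ with the minimum number of terms, rules out $u_i = e$ in the support, and then argues by contradiction: minimality forces every pair $u_i, u_j$ to be inseparable by the form, and applying \cref{lm2} (together with $C_S = \{e\}$) eventually forces $u_{i_0} = u_{j_0}$, so $\gamma_0$ is a single monomial. You instead bypass the minimality/contradiction machinery entirely with an explicit linear-algebra separation: choose a generic $y$ (off a finite union of proper rational hyperplanes, using non-degeneracy of the form in the closed case) so that the $\lambda_i = \genby{x_i, y}$ are non-zero and pairwise distinct, compute $\mathrm{ad}_{y^{-k}}\,\mathrm{ad}_y^{k}(v) = -k\sum q_i \lambda_i^{k+1} x_i$, and invert the Vandermonde-times-diagonal system to put each $x_j$ directly into $I$. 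Both proofs then finish identically by invoking the closed-surface remark of \cref{lm3}. What your approach buys is that it replaces the casework of the paper (the $u_1 = e$ reduction, the $\genby{u_i, v} = 0 \Leftrightarrow \genby{u_j, v} = 0$ dichotomy, and the appeal to \cref{lm2}) with one uniform computation; your note on why the naive $\mathrm{ad}_{y^{-1}}^{k}\mathrm{ad}_y^{k}$ fails for inverse pairs is exactly the right diagnostic, and the fix (single $y^{-k}$ bracket giving odd powers $\lambda_i^{k+1}$) is clean. The one thing both treatments leave implicit is that the zero ideal is a fourth possibility not listed in the theorem statement; your case split surfaces it explicitly, which is harmless.
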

\begin{proof}
Suppose $I \neq \mathbb{Q}[\{e\}]$. Among all elements $\in I \setminus \mathbb{Q}[\{e\}] \neq \emptyset$, choose $\gamma_0$ to be one of those elements that has smallest number of terms in $A_n$. 

Suppose that $\gamma_0 = q_1u_1 + q_2u_2 + \dotsc + q_ku_k, q_i \neq 0 \in \mathbb{Q}$, and $u_i \in A_n$ are distinct.

If there is some $u_i = e$, and, WLOG, that is $u_1 = e$. Then $k \geq 2$, and $q_2 \neq e$. As a result, $q_2 \not \in C_S$, and there exists $y \in A_n$ such that $\genby{q_2, y} \neq 0$. Because $\genby{q_2, q_2^{-1}} = 0$, $y \neq q_2$, and so $[\gamma_0, y] = \sum\limits_{i = 2}^k q_i \genby{u_i, y} u_iy \not \in \mathbb{Q}[\{e\}]$, and is in the ideal $I$. However, it has less than $\gamma_0$ at least $1$ term in $A_n$. Contraddiction. Therefore, $u_i \neq e$ for all $i = \overline{1, k}$

Now suppose there is some $v \in G$ such that $\genby{u_i, v} \neq 0$ and $\genby{u_j, v} = 0$. Then the element $\gamma_1 = [\sum\limits_{1 \leq t \leq k} q_tu_t, v] = \sum\limits_{1 \leq t \neq j \leq k}q_t\genby{u_t, v} u_tv \in I \setminus \mathbb{Q}[\{e\}]$ has at most $k-1$ terms in $A_n$. Contradiction. Therefore, there is no $v \in G$ such that $\genby{u_i, v} \neq 0$ and $\genby{u_j, v} = 0$. Hence, for any $1 \leq i \neq j \leq k$, $\genby{u_i, v} = 0$ whenever $\genby{u_j, v} = 0$. From here, we also have $\genby{u_i, u_j} = 0\ \forall 1 \leq i, j \leq k$

Now consider any $1 \leq i_0 \neq j_0 \leq k$. Because $u_{i_0} \not\in C_S$, there exists some $t \in G$ such that $\genby{u_{i_0}, t} \neq 0$. Then, by the above statement, $\genby{u_l, t} \neq 0\ \forall 1 \leq l \leq k$. 

As a result, $\beta_0 = [\sum_{i=1}^k q_iu_i, t] = \sum_{i=1}^k (\genby{u_i, t} q_i).u_it$ have exactly $k$ terms in $A_n$, and, therefore, also has the smallest number of terms in $A_n$. By a similar argument as before, we get $\genby{u_{i_0}t, v} = 0$ whenever $\genby{u_{j_0}t, v} = 0$

By \cref{lm2}, $(u_{i_0}t)^{k_1} = (u_{j_0}t)^{k_2}$ for some $k_1 \neq 0, k_2 \in \mathbb{Z}$. Hence $t^{k_1 - k_2} = u_{j_0}^{k_2}u_{i_0}^{-k1}$. As a result, $\genby{u_{i_0}, t^{k_1 - k_2}} = \genby{u_{i_0}, u_{j_0}^{k_2}u_{i_0}^{-k_1}} = k_2\genby{u_i, u_j} - k_1\genby{u_i, u_i} = 0 - 0 = 0$. 

If $k_1 \neq k_2$, then $\genby{u_{i_0}, t} = \frac{1}{k_1 - k_2}\genby{u_i, t^{k_1 - k_2}} = 0$. Contradiction. Therefore, $k_1 = k_2 \neq 0$. Hence, $u_{i_0} = u_{j_0}$. Therefore, $u_i = u_j$ with every $i \neq j$. As a result, $k$ has to be $1$.

As a result, $u_1 = \frac{1}{q_1}\gamma_0 \in I$ for some $u_1 \in A_n \neq e$. By \cref{lm3}, we have $A_n \setminus e \subset I$. Therefore, $I$ can only be $\mathbb{Q}[\{e\}]$, $\mathbb{Q}[A_n \setminus \{e\}]$ or $\mathbb{Q}[A_n]$.
\end{proof}

\begin{defi}
For $x \in \mathbb{Q}[A_n]$, suppose $x = \sum_{i = 1}^m q_iu_i$, where $u_i \in A_n$, and $q_i$ are non-zero rational numbers. 

We say that $u_i \sim u_j$ if $u_iu_j^{-1} \in C_S$. Then the set $\{u_1, u_2,\dotsc, u_m\}$ is partitioned into equivalent classes $R_i$ for $1 \leq i \leq p$.

Then $x = \sum_{i = 1}^p (\sum_{u_j \in R_i}q_ju_j) = \sum_{i = 1}^p (\sum_{u_j \in R_i} q_jc_ju_{h_i})$, where $u_{h_i}$ is a representative of the equivalent class $R_i$.

Note that each inner sum is either $\in \mathbb{Q}[C_S]$, the $\mathbb{Q}$-submodule of $\mathbb{Q}[A_n]$ generated by $C_S$, or equal to $f(u_{h_i}) \in X_{\alpha_i}$.

So $x$ can be rewritten as $\sum_{i = 1}^{p_1} f(v_i) + c$, for $f(v_i)$ in some $X_{\alpha_i}$, some $c \in \mathbb{Q}[C_S]$, and some $p_1 \leq p$. Moreover, by using the previous process, this representation of $x$, and we will call this the $\mathbf{standard}$ representation of $x \in \mathbb{Q}[A_n]$.
\end{defi}

\begin{lem}\label{lm4}
For any $y \in A_n$, and $f_{\alpha}(x)$ that is in some $X_{\alpha}$, we have $[f_{\alpha}(x), y] = \genby{x, y}f_{\alpha}(xy)$. As a result, $\mathbb{Q}[X_{\alpha}]$ is an ideal of $\mathbb{Q}[A(n)]$. We will call such ideal $\mathbb{Q}[X_{\alpha}]$ a \textbf{primitive} ideal
\end{lem}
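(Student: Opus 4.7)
The plan is to unpack the bracket $[f_\alpha(x), y]$ directly using the $\mathbb{Q}$-bilinearity of the bracket on $\mathbb{Q}[A(n)]$ together with the centrality of each $c_i$. Writing $f_\alpha(x) = \sum_{i=1}^k q_i c_i x$, bilinearity gives
\begin{equation*}
[f_\alpha(x), y] = \sum_{i=1}^k q_i \genby{c_i x, y}\, c_i x y.
\end{equation*}
Since $c_i \in C_S$, the symplectic form splits as $\genby{c_i x, y} = \genby{c_i, y} + \genby{x, y} = \genby{x, y}$ (the first term vanishes by definition of $C_S$). Factoring this common scalar out of the sum yields $\genby{x, y} \sum_{i=1}^k q_i c_i (xy) = \genby{x, y}\, f_\alpha(xy)$, which is the claimed identity. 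I expect this first computation to be entirely mechanical.

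To promote this identity into the ideal statement, I would take an arbitrary element of $\mathbb{Q}[X_\alpha]$, say $\sum_j r_j f_\alpha(x_j)$ with $x_j \in A_n \setminus C_S$, and an arbitrary $y \in A_n$. By bilinearity and the identity just proved,
\begin{equation*}
\Bigl[\sum_j r_j f_\alpha(x_j),\, y\Bigr] = \sum_j r_j \genby{x_j, y}\, f_\alpha(x_j y).
\end{equation*}
The only point that is not purely formal is verifying that each nonzero summand still lies in $X_\alpha$, i.e.\ that $x_j y \notin C_S$ whenever $\genby{x_j, y} \neq 0$. This is the main, though still brief, obstacle: if $x_j y$ happened to lie in $C_S$, then $\genby{x_j y, y} = 0$ would force $\genby{x_j, y} + \genby{y, y} = \genby{x_j, y} = 0$, contradicting our assumption. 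Hence nonzero terms are honest elements of $X_\alpha$, zero coefficient terms are harmless, and the whole bracket lies in $\mathbb{Q}[X_\alpha]$.

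Finally, to upgrade from $y \in A_n$ to an arbitrary element of $\mathbb{Q}[A(n)]$, I would simply invoke $\mathbb{Q}$-bilinearity once more and note that a $\mathbb{Q}$-linear combination of elements of $\mathbb{Q}[X_\alpha]$ stays in $\mathbb{Q}[X_\alpha]$. This shows $\mathbb{Q}[X_\alpha]$ absorbs brackets with arbitrary elements of $\mathbb{Q}[A(n)]$, hence is an ideal, justifying the terminology \textbf{primitive} ideal introduced in the statement.
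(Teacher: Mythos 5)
Your proof is correct and uses the same core computation as the paper, namely expanding $[f_\alpha(x),y]$ by bilinearity and using $\genby{c_i x, y} = \genby{x,y}$ because each $c_i$ is central. You go slightly further than the paper's one-line calculation by explicitly checking that $x_j y \notin C_S$ whenever $\genby{x_j, y}\neq 0$ (via $\genby{x_j y, y}=\genby{x_j,y}$), a point the paper leaves implicit; this check is worth recording since it is what guarantees the nonzero terms $f_\alpha(x_j y)$ actually lie in $X_\alpha$ rather than merely in $\mathbb{Q}[C_S]$.
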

\begin{proof}
\begin{equation*}
[f_{\alpha}(x), y] = [\sum_i q_ic_ix, y] = \sum_i q_i\genby{c_ix, y}c_ixy = \sum_i \genby{x, y}q_ic_ixy = \genby{x, y}f_{\alpha}(xy)
\end{equation*}
\end{proof}

\begin{lem}\label{lm5}
Suppose that an ideal $I$ of the Lie algebra $\mathbb{Q}[A(n)]$ has an element whose standard representation contains a term that is an element of some $X_{\alpha_0}$. Then $X_{\alpha_0} \subset I$. 
\end{lem}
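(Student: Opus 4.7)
The plan is to induct on $p_1$, the number of non-central terms in the standard representation of an element $\gamma \in I$ that contains a term in some $X_{\alpha_0}$. At each stage the target is to produce $\gamma^* \in I$ with strictly fewer non-central terms whose standard representation still contains a term in some $X_{\alpha'_0}$, where $\alpha'_0$ has the same central factors $c_k$ as $\alpha_0$ but possibly rescaled coefficients $q_k$. Since $X_{\alpha'_0}$ is a nonzero rational multiple of $X_{\alpha_0}$ and $I$ is a $\mathbb{Q}$-submodule, the implication $X_{\alpha'_0} \subset I \Rightarrow X_{\alpha_0} \subset I$ goes through even though the coefficients drift under bracketing.

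For the base case $p_1 = 1$, write $\gamma = f_{\alpha_0}(v_1) + c$ with $c \in \mathbb{Q}[C_S]$. Since $v_1 \notin C_S$ we have $M(v_1) \neq 0$, so one can choose $y \in A_n$ with $\genby{v_1, y} \neq 0$ and $v_1 y \notin C_S$ (avoiding the two proper subspaces of $\mathbb{Q}^n$ on which these conditions fail). Then $[\gamma, y] = \genby{v_1, y} f_{\alpha_0}(v_1 y)$ by \cref{lm4} (the central $c$ contributes nothing), and rescaling by $\genby{v_1, y}^{-1}$ puts $f_{\alpha_0}(v_1 y) \in X_{\alpha_0} \cap I$; \cref{lm3} then delivers $X_{\alpha_0} \subset I$.

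For the inductive step $p_1 \geq 2$, the easy sub-case (Case A) is when some $j > 1$ satisfies $\genby{v_1, v_j} \neq 0$. Bracketing with $v_j^{-1}$ gives $[\gamma, v_j^{-1}] = -\sum_i \genby{v_i, v_j} f_{\alpha_i}(v_i v_j^{-1})$, which kills the $j$th term (via $\genby{v_j, v_j} = 0$), keeps the first term nonzero and non-central (since $v_1 v_j^{-1} \notin C_S$, as the $v_i$ lie in distinct $\sim$-classes), and annihilates $c$. So $\gamma^* := [\gamma, v_j^{-1}]$ has at most $p_1 - 1$ non-central terms with first term in a rescaled $X_{\alpha'_0}$, and induction applies.

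The main obstacle is Case B, in which $\genby{v_1, v_j} = 0$ for every $j > 1$; this is precisely the degeneracy highlighted by \cref{lm2}, and no single bracket can remove a non-first term while keeping the first term alive. The resolution is a preliminary shift by a sufficiently generic $y_0$ satisfying $\genby{v_1, y_0} \neq 0$, $v_1 y_0 \notin C_S$, and $\genby{v_1 v_j^{-1}, y_0} \neq 0$ for some $j > 1$---three open conditions on $y_0$, the third being realizable because each $v_1 v_j^{-1}$ lies outside $C_S$ and hence the relevant perpendicular hyperplanes cannot exhaust $A_n$. The identity $\genby{v_1 y_0, v_j y_0} = \genby{v_1 v_j^{-1}, y_0}$ (using $\genby{v_1, v_j} = 0 = \genby{y_0, y_0}$) shows that $\gamma' := [\gamma, y_0] \in I$ falls into Case A with respect to this $j$, so applying Case A to $\gamma'$ produces the desired $\gamma^*$. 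Intuitively, the generic shift by $y_0$ decouples the $v_i$ enough to break the Case B degeneracy.
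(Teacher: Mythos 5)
Your proof is correct, and in the degenerate case it is cleaner than the paper's. Both arguments reduce the number of terms of a carefully chosen element of $I$ by bracketing (you by induction on the count of non‑central terms, the paper by taking an element with the fewest terms and deriving contradictions), and both observe that when $\genby{v_1, v_j} \neq 0$ for some $j$, bracketing with $v_j^{-1}$ kills the $j$th term while preserving the first and annihilating the central part; that is your Case A and, in the paper, the contradiction to minimality. You and the paper diverge when $\genby{v_1, v_j} = 0$ for all $j > 1$. There the paper first derives $\genby{x_0, x_i} = 0$, invokes \cref{lm2} to write $x_0^{k_1^{(i)}} = c_i x_i^{k_2^{(i)}}$, brackets with an arbitrary $y_0$, reapplies the whole machinery to the shifted element to get $(x_0 y_0)^{k_3^{(i)}} = d_i (x_i y_0)^{k_4^{(i)}}$, and closes with an exponent computation forcing $k_3^{(i)} = k_4^{(i)}$ and hence $x_i x_0^{-1} \in C_S$, contradicting the standard representation. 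You instead use the identity $\genby{v_1 y_0, v_j y_0} = \genby{v_1 v_j^{-1}, y_0}$, which holds exactly because $\genby{v_1, v_j} = 0$, and pick $y_0$ outside the two proper subspaces $M(v_1)^{\bot}$ and $M(v_1 v_j^{-1})^{\bot}$; this decouples the shifted terms and sends you straight back to Case A, with no appeal to \cref{lm2} and no arithmetic with exponents. The one spot to tighten: the sentence that $\gamma'$ ``falls into Case A with respect to this $j$'' presupposes $\genby{v_j, y_0} \neq 0$. If instead $\genby{v_j, y_0} = 0$, the $j$th term is already absent from $\gamma' = [\gamma, y_0]$, so $\gamma'$ has strictly fewer non‑central terms and the inductive hypothesis applies directly; the conclusion is unchanged, but the two subcases should be stated.
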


\begin{proof}
Among all of elements of $I$ that contains a term $\in X_{\alpha_0}$ in its standard expression, choose $\gamma_0$ to be the element with the smallest number of terms in $A_n$. Note that the set of elements of $I$ containing a term $\in X_{\alpha_0}$ in its standard expression is non-empty because of the lemma's assumption. 

Suppose $\gamma_0$ has the standard representation $\gamma_0 = \sum\limits_{i = 0}^k f_{\alpha_i}(x_i) + c$, where $x_i$ and $x_ix_j^{-1} \in A_n \setminus C_S$ for $1 \leq i \neq j \leq k$, and $c \in C_S$. In this representation, $f_{\alpha_0}(x_0)$ is the term that belongs to $X_{\alpha_0}$. We will prove that $c = 0$ and $k = 0$. 

If $c \neq 0$, then take $y \in A_n$ such that $\genby{x_0, y} \neq 0$. Using \cref{lm4}, we get: 
\begin{multline*}
\gamma'_0 = \dfrac{1}{\genby{x_0, y}}[\gamma_0, y]  = \sum\limits_{i =0}^k \dfrac{1}{\genby{x_0, y}}[f_{\alpha_i}(x), y] \\
= \sum\limits_{i = 0}^k\dfrac{\genby{x_i, y}}{\genby{x_0, y}}f_{\alpha_i}(x_iy) = \sum\limits_{\substack{0 \leq i \leq k \\ \genby{x_i, y} \neq 0}} f_{\beta_i}(x_iy) \in I
\end{multline*}
for some $f_{\beta_i} \in X_{\beta_i}$.

Because for $1 \leq i \neq j \leq k$, $x_iy(x_jy)^{-1} = x_ix_j^{-1} \not \in C_S$, $\sum\limits_{\substack{0 \leq i \leq k \\ \genby{x_i, y} \neq 0}} f_{\beta_i}(x_iy)$ is the standard representation of $\gamma'_0$. 

Also, in this presentation, the first term is $f_{\beta_0}(x_0y) = f_{\alpha_0}(x_0y) \in X_{\alpha_0}$ (Note that $\genby{x_0, y} = \genby{x_0y, y} \neq 0$, so $x_0y \not \in C_S$). However, $\gamma'_0$ has at most $k + 1$ terms in the standard representation, and this is impossible because of the choice of $\gamma_0$, which has $k + 2$ terms. So $c = 0$.

\vspace{2mm}
Now suppose, by contradiction, that $k > 0$.

Now for each $1 \leq i_0 \leq k$, if there is some $y \in G$ such that $\genby{x_0, y} \neq 0$ and $\genby{x_{i_0}, y} = 0$, then:
\begin{equation*}
\begin{split}
\gamma_1 &= \dfrac{1}{\genby{x_0, y}}[\gamma_0, y]  = \sum\limits_{i = 0}^k \dfrac{1}{\genby{x_0, y}}[f_{\alpha_i}(x), y] \\
& = \sum\limits_{i = 0}^k \dfrac{\genby{x_i, y}}{\genby{x_0, y}}f_{\alpha_i}(x_iy) = \sum\limits_{\substack{0 \leq i \neq i_0 \leq k \\ \genby{x_i, y} \neq 0}} f_{\beta_i}(x_iy) \in I
\end{split}
\end{equation*}
for some $f_{\beta_i} \in X_{\beta_i}$.

Again, $\sum\limits_{\substack{0 \leq i \neq i_0 \leq k \\ \genby{x_i, y} \neq 0}} f_{\beta_i}(x_iy)$ is a standard representation of $\gamma_1$ that contains $f_{\beta_0}(x_0  y) = f_{\alpha_0}(x_0y) \in X_{\alpha_0}$. However, $\gamma_1$ has at most $k$ terms, while $\gamma_0$ has $k + 1$ terms in their standard presentation, and this is impossible.

As a result, $\genby{x_0, y} = 0$ whenever $\genby{x_i, y} = 0$ for all $1 \leq i \leq k$.

Therefore, for any $1 \leq i \leq k$, $\genby{x_0, x_i} = 0$. Moreover, by \cref{lm2}, we get $x_0 ^{k_1^{(i)}} = c_ix_i^{k_2^{(i)}}$ for some $c_i \in C_S$ and $k_1^{(i)} \neq 0, k_2^{(i)} \in \mathbb{Z}$.\\

Since $x_0 \not\in C_S$, there exists some $y_0 \in G$ such that $\genby{x_0, y_0} \neq 0$. Consider \begin{equation*}
\begin{split}
\gamma_2 &= \dfrac{1}{\genby{x_0, y_0}}[\gamma_0, y_0]  = \sum\limits_{i = 0}^k \dfrac{1}{\genby{x_0, y_0}}[f_{\alpha_i}(x_i), y_0] \\
& = \sum\limits_{i = 0}^k \dfrac{\genby{x_i, y_0}}{\genby{x_0, y_0}}f_{\alpha_i}(x_iy_0) = \sum\limits_{\substack{0 \leq i \leq k \\ \genby{x_i, y_0} \neq 0}} f_{\beta'_i}(x_iy_0) \in I
\end{split}
\end{equation*}
for some $f_{\beta'_i} \in X_{\beta_i}$.

Again, $\sum\limits_{\substack{0 \leq i \leq k \\ \genby{x_i, y_0} \neq 0}} f_{\beta'_i}(x_iy_0)$ is the standard representation of $\gamma_2$ with at most $k + 1$ terms and contains the term $f_{\alpha_0}(x_0y_0) \in X_{\alpha_0}$. Therefore, $\gamma_2$ must also have the smallest number of terms in its standard representation. 

Using the same argument, which we used for $\gamma_0$, for $\gamma_2$,  we have for $1 \leq i \leq k$, $(x_0y_0)^{k_3^{(i)}} = d_i(x_iy_0)^{k_4^{(i)}}$ for some $d_i \in C_S$, and $k_3^{(i)} \neq 0, k_4^{(i)} \in \mathbb{Z}$. Therefore, $y_0^{k_3^{(i)}-k_4^{(i)}} = x_i^{k_4^{(i)}}x_0^{-k_3^{(i)}}d_i$. Because $\genby{x_i, x_0} = 0$ and $c \in C_S$, $\genby{x_0, y_0}^{k_3^{(i)}-k_4^{(i)}} = 0$.

If $k_3^{(i)} \neq k_4^{(i)}$, we would have $\genby{x_0, y_0} = \frac{1}{k_3^{(i)}-k_4^{(i)} } \genby{x_0, y}^{k_3^{(i)}-k_4^{(i)} } = 0$. Contradiction. 

So $k_3^{(i)}= k_4^{(i)} \neq 0$, and, therefore, for $1 \leq i \leq k$, $x_ix_0^{-1} \in C$. Contradiction because $x_ix_j^{-1} \not \in C_S$ for $1 \leq i \neq j \leq k$. 

\vspace{2mm}
As a result, $f_{\alpha_0}(x_0) = \gamma_0 \in I$, so $I$ contains some element $f_{\alpha_0}(x_0)$ of $X_{\alpha_0}$, and, by \cref{lm3}, $X_{\alpha_0} \subset I$.
\end{proof}

\begin{theorem}(Theorem for classification of ideals) 
	
All ideals $I$ of the Lie algebra $\mathbb{Q}[A_n]$ is the sum, as vector space, of $\{\mathbb{Q}[X_{\alpha_t}]\}_{t \in T}$ and $C_1$, where $C_1$ is a subspace of $\mathbb{Q}[C_S]$ and can be empty, and some index set $T$. Here $\mathbb{Q}[T]$ is the $\mathbb{Q}$-submodule of $\mathbb{Q}[A(n)]$ generated by $T$.
\end{theorem}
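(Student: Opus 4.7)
The plan is to use \cref{lm5} as the main dismantling tool: any element of an ideal can be broken apart in its standard representation, and \cref{lm5} says that each primitive term appearing forces the whole class $X_{\alpha}$ into the ideal. Thus the only remaining freedom is what sits in the center $\mathbb{Q}[C_S]$, which is captured by a single subspace $C_1$.

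First, given an ideal $I$, I would set $T = \{\alpha : X_{\alpha} \subseteq I\}$ and $C_1 = I \cap \mathbb{Q}[C_S]$, which is immediately a $\mathbb{Q}$-subspace of $\mathbb{Q}[C_S]$. The inclusion $\sum_{t \in T} \mathbb{Q}[X_{\alpha_t}] + C_1 \subseteq I$ is then automatic from the definitions: each $\mathbb{Q}[X_{\alpha_t}]$ is an ideal by \cref{lm4} and is contained in $I$ by the definition of $T$, while $C_1 \subseteq I$ by definition.

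For the reverse inclusion, I would pick any $x \in I$ and write it in standard form $x = \sum_{i=1}^{p_1} f_{\alpha_i}(v_i) + c$ with each $f_{\alpha_i}(v_i) \in X_{\alpha_i}$ and $c \in \mathbb{Q}[C_S]$. By \cref{lm5}, each index $\alpha_i$ appearing forces $X_{\alpha_i} \subseteq I$, hence $\alpha_i \in T$, so every term $f_{\alpha_i}(v_i)$ lies in $\mathbb{Q}[X_{\alpha_i}] \subseteq \sum_{t \in T}\mathbb{Q}[X_{\alpha_t}]$. Subtracting these terms from $x$ yields $c = x - \sum_{i=1}^{p_1} f_{\alpha_i}(v_i) \in I$, and since $c \in \mathbb{Q}[C_S]$, we conclude $c \in I \cap \mathbb{Q}[C_S] = C_1$. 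Thus $x$ decomposes as a sum of an element of $\sum_{t \in T}\mathbb{Q}[X_{\alpha_t}]$ and an element of $C_1$, finishing the proof.

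The hard part is essentially absent at this stage: all the delicate combinatorics have already been absorbed into the proof of \cref{lm5}. The only subtlety worth double-checking is that the standard representation genuinely separates $x$ into pieces lying in distinct primitive classes $X_{\alpha_i}$ plus a single central piece, so that the term-by-term application of \cref{lm5} is legitimate; this is just a matter of unwinding the definition via the equivalence classes $R_i$ used to define the standard form, and does not require any new ideas.
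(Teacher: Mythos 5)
Your proof is correct and takes essentially the same route as the paper: both lean entirely on \cref{lm5} to promote each primitive term appearing in a standard representation to the full $X_{\alpha}\subseteq I$, and then subtract to isolate the central part. Your bookkeeping is slightly cleaner (defining $T=\{\alpha: X_\alpha\subseteq I\}$ and $C_1 = I\cap\mathbb{Q}[C_S]$ directly, rather than via the collection of terms occurring in standard representations), but this is a cosmetic difference, not a different argument.
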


\begin{proof}
Any subspace $C_1$ of $\mathbb{Q}[C_S]$ is obviously an ideal. Therefore, if a $\mathbb{Q}$-submodule (or subspace) $I$ is the sum of some of primitive ideals (defined in \cref{lm4}), and $C_1$, then $I$ must also be an ideal of $\mathbb{Q}[A_n]$.

Now we will prove that if $I$ is an ideal, then $I$ must be the sum of primitive ideals $\mathbb{Q}[X_{\alpha}]$ and some subspace $C_1$ of $\mathbb{Q}[C_S]$. 

Let $\Gamma$ be the set of all $\alpha$ such that in the standard representation of some element of $I$, there is a term $f_{\alpha}(x)$ which is an element of $X_{\alpha}$. Let $C$ be the set of all terms that are in $\mathbb{Q}[C_S]$ and appears in the standard representation of some elements of $I$. Clearly, $I$ is the subset of $\displaystyle\bigoplus_{\alpha \in \Gamma} \mathbb{Q}[X_{\alpha}]\ \bigoplus\  \mathbb{Q}[C_1]$. 

\vspace{2mm}
Now, by \cref{lm5}, $X_{\alpha}$ is a subset of $I$  for every $\alpha \in \Gamma$. Therefore, $\displaystyle\bigoplus_{\alpha \in \Gamma} \mathbb{Q}[X_{\alpha}]$ is also a subset of $I$. 

Now, for each element $c$ of $C$, there exists $x \in I$, with the standard representation $x = c + d$, where $c \in C \subset \mathbb{Q}[C_S]$, and $d = \sum\limits_{i = 1}^k f_{\alpha_i}(x_i)$. Each $\alpha_i \in \Gamma$ because of the definition of $\Gamma$, so $d \in \displaystyle\bigoplus_{\alpha \in \Gamma} \mathbb{Q}[X_{\alpha}]$. Therefore, $d \in I$, and hence, $c \in I$. Therefore, $\mathbb{Q}[C] \subset I$. Hence, $\displaystyle\bigoplus_{\alpha \in \Gamma} \mathbb{Q}[X_{\alpha}]\ \bigoplus\  \mathbb{Q}[C_1] \subset I$

As a result, $I = \displaystyle\bigoplus_{\alpha \in \Gamma} \mathbb{Q}[X_{\alpha}]\ \bigoplus\  \mathbb{Q}[C_1]$. 
\end{proof}

Now the following lemma will help us compute the center $C_S$ explicitly.

\begin{lem}\label{lm6}
For the surface $S$ with boundary with genus $g$ and $b$ boundary components, there exists generators $a_1, a_2,..., a_n$ with $n = 2g + b - 1$ such that the abelianization of the fundamental group of $S$, $A_n = \genby{a_1, a_2,..., a_n | a_i a_j = a_j a_i}$ and $\genby{a_{2i-1}, a_{2i}} = 1 \ \forall i=\overline{1,g}$ and $\genby{a_i, a_j} = 0\ \forall i \leq j$.
\end{lem}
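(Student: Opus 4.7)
The plan is to exhibit the classical geometric generators of $\pi_1(S)$ and read off their pairwise intersection numbers directly. A compact orientable surface $S$ of genus $g$ with $b \geq 1$ boundary components deformation retracts onto a wedge of $2g+b-1$ circles, so $\pi_1(S)$ is free of rank $2g+b-1$. Concretely, from the standard $(4g+2b)$-gon model (or from cutting each handle open along its meridian) one obtains the presentation
\[ \pi_1(S) = \genby{\alpha_1, \beta_1, \ldots, \alpha_g, \beta_g, c_1, \ldots, c_b \mid [\alpha_1,\beta_1]\cdots[\alpha_g,\beta_g]\, c_1 c_2 \cdots c_b = 1}, \]
where $\alpha_i, \beta_i$ are the two standard loops around the $i$-th handle and $c_j$ is a loop parallel to the $j$-th boundary component. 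The single relation lets us solve for $c_b$, yielding a free group on the remaining generators. I would then relabel these as $a_{2i-1} = \alpha_i$, $a_{2i} = \beta_i$ for $1 \leq i \leq g$, and $a_{2g+j} = c_j$ for $1 \leq j \leq b-1$, giving exactly $n = 2g+b-1$ generators for $A(n)$.

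With these generators in hand, I would compute each $\genby{a_i, a_j} = m[\mathbf{Ab}^{-1}(a_i), \mathbf{Ab}^{-1}(a_j)]$ by choosing the canonical geometric representatives and applying Definition 3. On each handle, the pair $(\alpha_i, \beta_i)$ can be drawn transverse at a single point, oriented so that the intersection has sign $+1$, and the pairs on distinct handles can be placed in disjoint neighbourhoods. Each $c_j$ can be pushed into a collar of the $j$-th boundary component, making it disjoint from every other $c_k$ and from all of the $\alpha_i$, $\beta_i$. Counting signed intersections then yields $\genby{a_{2i-1}, a_{2i}} = 1$ for $1 \leq i \leq g$ and $\genby{a_i, a_j} = 0$ for every other pair with $i \leq j$.

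The only potential obstacle is justifying that eliminating $c_b$ from the generating set does not damage the symplectic picture, but this is automatic. In $A(n)$ the single relation collapses to $c_1 + c_2 + \cdots + c_b = 0$, so any pairing involving $c_b$ is, by the $\mathbb{Q}$-bilinearity from Definition 4, a $\mathbb{Z}$-linear combination of pairings among the remaining $a_i$; this agrees with the geometric computation obtained from a representative curve parallel to the $b$-th boundary, which is disjoint from all the handle curves and the other boundary loops. Hence no deeper argument is needed: the content of the lemma is that the standard handle-plus-boundary picture of $S$ realises the standard symplectic form on $H_1(S;\mathbb{Z})$ together with a rank-$(b-1)$ null block coming from the boundary loops.
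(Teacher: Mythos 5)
Your argument is correct and reaches the same conclusion, but via a different concrete surface model than the paper uses. The paper builds a bespoke $4n$-gon ($n = 2g+b-1$) in which only alternating edges carry labels $a_i, A_i$ (the unlabelled edges become boundary arcs after gluing); it then verifies by an explicit vertex-cycle count that the glued surface has genus $g$ and $b$ boundary components, and reads off the intersection pattern from the chords of that polygon. You instead start from the classical one-relator presentation with handle generators $\alpha_i,\beta_i$ and boundary generators $c_j$, eliminate $c_b$ using the relation, and invoke the standard geometric fact that handle pairs meet transversally once while boundary-parallel loops can be made disjoint from everything. Both routes are valid; yours is shorter and leans on well-known surface topology, while the paper's is more self-contained in that it constructs and certifies the polygon model from scratch rather than quoting the standard presentation. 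Two points worth making explicit in your write-up: first, since $m[\cdot,\cdot]$ in Definition~3 depends only on free homotopy classes, you should say that each $c_j$ may be pushed into a collar neighbourhood of its boundary component (this is what licenses the disjointness from all handle curves and from the other $c_k$); second, you should note that the orientations of $\alpha_i$ and $\beta_i$ can be chosen independently handle by handle, so that every $\genby{a_{2i-1},a_{2i}}$ can simultaneously be made $+1$ rather than merely $\pm 1$, which is what the lemma actually asserts.
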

\begin{proof}
We consider a $4n$-gon with $2n$ labels $a_1, a_2,..., a_n, A_1, A_2,..., A_n$ such that only odd edges (the first, the third, and so on) have labels. Then we mark the $2n$ edges with $2n$ labels $a_1, a_2, A_1, A_2,...., a_{2i-1}, a_{2i}, A_{2i-1}, A_{2i},..., a_{2g-1}, a_{2g}, A_{2g-1}, A_{2g},$
and then $A_{2g+1}, a_{2g+1},..., A_{2g+b-1}, a_{2g+b-1}$ in this order. The following picture describe the above process for surface with genus 2 and with 1 boundary component:

\begin{center}
\includegraphics[scale=0.5]{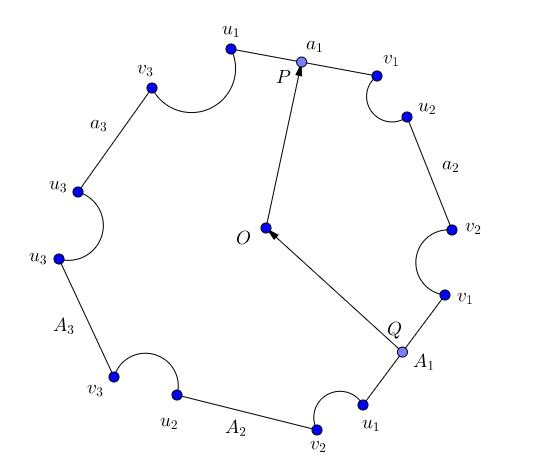}
\end{center}

We then glue $a_i$ and $A_i$ for $i=\overline{1,g}$ without creating Mobius band to obtain a surface. Denote $u_i$, and $v_i$ be the vertices of the side $a_i$ and $A_i$ for $i=\overline{1,g}$. We define that 2 vertices $a$ and $b$ are equivalent: $a \sim b$ if $a$ and $b$ are in the same small arcs, which are even edges of the $4n$-gon that make up the boundaries. Then we will have cycles of equivalent vertices. The number of cycles will be the number of boundaries. In fact, we have the cycle $u_1 \sim v_2 \sim v_1 \sim u_2 \sim u_3 \sim v_4 \sim v_3 \sim u_4 \sim ... \sim u_{2g-1} \sim v_{2g} \sim v_{2g-1} \sim u_{2g} \sim v_{2g+1} \sim v_{2g+2} \sim ... \sim v_{2g+b-1}$ and $b-1$ other cycles of 1 vertex $\{u_{2g+i}\}_{i=1}^{b-1}$.

Therefore the obtained surface is an oriented surface with $b$ boundary component and with genus $(n + 1 - b)/2 = g$. So this surface is homeomorphic to $S$ by the classfication theorem for surface (with boundary). So we can identify $S$ with this glued surface.

Now take $O$ inside the polygon. $P = Q$ are inside edges labeled by $a_i$, $A_i$ respectively. Let $\alpha_i$ be the curve $O \rightarrow P = Q \rightarrow O$. Denote $a_i$ as the homotopy classes of curve base at $O$ which has the curve $\alpha_i$ as an representation. Then the fundamental group of the glued surface, and also, $S$, will be the free group generated by $\{a_i\}_{i=1}^n$. Moreover, based on our labeling of the $4n$-gon, we can choose the curves $\{\alpha_i\}_{i = 1}^n$, which are freely homotopic to curves that are represented by $a_i$ in $\pi_1(S)$, so that $\alpha_i$ for $i = \overline{2g+1, 2g+b-1}$ interesects no other curve, and $\alpha_{2i-1}$ only intersects $\alpha_{2i}$ for $i = \overline{1,g}$ and no other curve. Hence, we will have the abelianization of $\pi_1(S)$, $A_n = \genby{a_1, a_2,..., a_n | a_i a_j = a_j a_i}$ such that $\genby{a_{2i-1}, a_{2i}} = 1 \ \forall i=\overline{1,g}$ and $\genby{a_i, a_j} = 0\ \forall i \leq j$ because of defintion of $\genby{a_i, a_j}$
\end{proof}

\begin{lem}
(Lemma for computing $C_S$) 
\begin{itemize}[leftmargin=0.1 in]
\item For closed surface $S$, $C_S = \{e\}$.

\item By \cref{lm6}, for surface $S$ with boundary, there exists generators $a_1, a_2,..., a_n$ with $n = 2g + b - 1$ such that the abelianization of the fundamental group of $S$, $A_n = \genby{a_1, a_2,..., a_n | a_i a_j = a_j a_i}$ and $\genby{a_{2i-1}, a_{2i}} = 1 \ \forall i=\overline{1,g}$ and other $\genby{a_i, a_j} = 0\ \forall i \neq j$. Then $C_S$ is the subgroup generated by $\{a_i\}_{i = 2g + 1}^n$.
\end{itemize}
\end{lem}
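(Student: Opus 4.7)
The plan is to translate the defining condition $x \in C_S$ into a linear-algebra kernel computation for the matrix $A$ of the symplectic pairing, and then read off the kernel in each of the two cases using the explicit form of $A$ that has already been worked out earlier in the paper. Identifying $A(n)$ with $\mathbb{Z}^n$ via $x = \prod_{i=1}^n a_i^{x_i} \leftrightarrow X = (x_1, \ldots, x_n)$, the condition $x \in C_S$ is $\genby{x, a_j} = M_j(x) = 0$ for every $j$, i.e.\ $A X = 0$. So in both cases the task reduces to describing $\ker A$, and then checking that the corresponding multi-indices correspond to the stated subgroup of $A(n)$.

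For the closed surface, I would invoke the remark preceding the first Proposition, which gives $\genby{a_{2i-1}, a_{2i}} = 1$ for $i = 1, \ldots, g$ and all other pairings zero (up to antisymmetry). Then $A$ is block-diagonal with $g$ copies of $\begin{pmatrix} 0 & -1 \\ 1 & 0 \end{pmatrix}$, so $\det A = \pm 1$ and $\ker A = \{0\}$. Hence $C_S = \{e\}$.

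For the boundary case, I would invoke \cref{lm6} to obtain generators satisfying $\genby{a_{2i-1}, a_{2i}} = 1$ for $i = 1, \ldots, g$ and all other pairings zero. A direct computation then gives $M_{2k}(x) = x_{2k-1}$ and $M_{2k-1}(x) = -x_{2k}$ for $k \leq g$, and $M_j(x) = 0$ for $j > 2g$. So $M(x) = 0$ if and only if $x_1 = \cdots = x_{2g} = 0$, which is exactly the condition $x \in \genby{a_{2g+1}, \ldots, a_n}$. The converse inclusion is immediate from the same formulas, since any such $x$ has $M(x) = 0$ and hence $\genby{x, y} = Y \cdot M(x) = 0$ for every $y \in A(n)$.

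The main substantive work has already been done in \cref{lm6}, which produces a basis realizing the standard symplectic form on the first $2g$ generators and placing the remaining $b-1$ generators in the radical of the pairing. Given that preparation, the present lemma is simply a read-off of the kernel of a $2{\times}2$-block-diagonal symplectic matrix padded by zero rows and columns, so I do not expect any genuine obstacle here; the only thing to be careful about is that $\genby{-,-}$ is really determined by the pairings $\genby{x, a_j}$ with the single-letter generators, which is why reducing to $AX = 0$ in the first paragraph is legitimate.
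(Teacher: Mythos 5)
Your proposal is correct and takes essentially the same approach as the paper: in both cases one uses the explicit pairing from \cref{lm6} (or, for the closed surface, from the fundamental polygon) to reduce the condition $\genby{x,a_j}=0$ for all $j$ to the vanishing of the first $2g$ exponents of $x$; the determinant/kernel packaging is just a linear-algebra reformulation of the same coordinate-by-coordinate read-off.
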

\begin{proof}
For the closed surface $S$, and let $a_j$ for $j \in \overline{1, n}$ with $n= 2g$ be generators of $A_n$ that corresponds to the $i^{th}$ sides of the fundamental polygon of $S$. Then since only $(2i-1)^{th}$ and $(2i)^{th}$ sides of the fundamental polygon intersects, we have $\genby{a_{2i-1}, a_{2i}} = 1$, and other $\genby{a_i, a_j} = 0$.

Now suppose $x = \prod\limits_{j=1}^na_j^{k_j}$ for some integers $k_j$. Then $0 = [x, a_{2i-1}] = k_{2i}$, and $0 = [x, a_{2i}] = k_{2i-1}$ for each $i$ from $1$ to $r$. Hence,  $C_S = \{e\}$.

\vspace{4mm}
For surface with genus $g$ and $b$ boundary components and with $n = 2g + b - 1$, by \cref{lm6}, $A_n = \genby{a_1, a_2,..., a_n | a_i a_j = a_j a_i}$ and $\genby{a_{2i-1}, a_{2i}} = 1 \ \forall i=\overline{1,g}$ and other $\genby{a_i, a_j} = 0\ \forall i \neq j$. Then $C_S$ is the subgroup generated by $\{a_i\}_{i = 2g + 1}^n$.

Now suppose $x = \prod\limits_{j=1}^n a_j^{k_j}$ is some element of $C_S$. Then $0 = [x, a_{2i}] = k_{2i-1}$, and $0 = [x, a_{2i-1}] = k_{2i}$ with $i \in \overline{1, n}$. Therefore, $x = \prod\limits_{j=2g+1}^n a_j^{k_j}$. Hence $C_S = \genby{a_{2g+1},...,a_n}$.
\end{proof}

Below is a corollary from the classification theorem of ideals and the lemma for computing the set $C_S$ for surface $S$ with more than $1$ boundary components.  

\begin{corollary}
There are infinitely many non-trivial symmetric ideals (defined in \cref{def6}) of the Goldman algebra $\mathbb{Q}[\hat{\pi_0}]$ on the surface with more than $1$ boundary components.
\end{corollary}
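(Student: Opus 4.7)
The plan is to combine the classification theorem for ideals of $\mathbb{Q}[A(n)]$ with the explicit description of the center $C_S$ on a surface with $b \geq 2$ boundary components, and then pull back along $\mathbf{Ab}_{\mathbb{Q}}$. First I would invoke the center computation lemma: with $n = 2g + b - 1$ and $b \geq 2$, one has $C_S = \genby{a_{2g+1}, \ldots, a_n}$, a free abelian group of rank $b - 1 \geq 1$. Consequently $\mathbb{Q}[C_S]$ is an infinite-dimensional $\mathbb{Q}$-vector space; for concreteness, the distinct monomials $\{a_{2g+1}^k\}_{k \in \mathbb{Z}}$ are linearly independent.

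Next, I would appeal to the classification theorem, which tells us that any $\mathbb{Q}$-subspace $C_1 \subseteq \mathbb{Q}[C_S]$ is already an ideal of $\mathbb{Q}[A(n)]$ (take the collection $\{\mathbb{Q}[X_{\alpha_t}]\}_{t \in T}$ of primitive summands to be empty). So it suffices to exhibit an infinite family of pairwise distinct, nonzero, proper subspaces of $\mathbb{Q}[C_S]$. A transparent choice is
\[
C_1^{(k)} \;:=\; \mathrm{span}_{\mathbb{Q}}\bigl\{a_{2g+1},\, a_{2g+1}^2,\, \ldots,\, a_{2g+1}^k\bigr\}, \qquad k = 1, 2, 3, \ldots,
\]
each of $\mathbb{Q}$-dimension exactly $k$. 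The family $\{C_1^{(k)}\}_{k \geq 1}$ is therefore pairwise distinct, with $\{0\} \subsetneq C_1^{(k)} \subsetneq \mathbb{Q}[A(n)]$ for every $k$.

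Finally, I would pull back by $\mathbf{Ab}_{\mathbb{Q}}$, setting $I_k := \mathbf{Ab}_{\mathbb{Q}}^{-1}(C_1^{(k)})$. By Definition \ref{def6}, each $I_k$ is a symmetric ideal of $\mathbb{Q}[\hat{\pi_0}]$. Since $\mathbf{Ab}_{\mathbb{Q}}$ is surjective, $\mathbf{Ab}_{\mathbb{Q}}(I_k) = C_1^{(k)}$, so the $I_k$ are pairwise distinct. They are nontrivial because $C_1^{(k)}$ is neither zero nor all of $\mathbb{Q}[A(n)]$; in particular $I_k$ contains any preimage of $a_{2g+1}$ and is properly contained in $\mathbb{Q}[\hat{\pi_0}]$ (for instance, any freely homotopic class whose abelianization is a generator $a_j$ with $j \leq 2g$ lies outside $I_k$).

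There is no real obstacle — the heavy lifting has already been done by the classification theorem and the center computation. The only subtlety to record is that distinct ideals in the codomain have distinct preimages, which is immediate from surjectivity of $\mathbf{Ab}_{\mathbb{Q}}$.
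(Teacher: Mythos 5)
Your proof is correct and takes essentially the same route the paper intends: the corollary is stated to follow from the classification theorem together with the lemma computing $C_S$, and you carry this out explicitly by noting that $C_S$ has positive rank when $b \geq 2$, so $\mathbb{Q}[C_S]$ admits an infinite chain of distinct proper subspaces, each of which is an ideal of $\mathbb{Q}[A(n)]$ by the classification, and then pulling back along the surjection $\mathbf{Ab}_{\mathbb{Q}}$. The explicit family $C_1^{(k)} = \mathrm{span}_{\mathbb{Q}}\{a_{2g+1},\ldots,a_{2g+1}^k\}$ and the remark that surjectivity forces distinct ideals to have distinct preimages are exactly the details the paper leaves implicit.
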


\section{An infinite descending chain of ideals in the Goldman algebra}
\begin{defi}\label{def11}
Consider the surface $S$ with at least 1 boundary component. Choose a fixed boundary. Then choose a curve (the red curve) going around the boundary exactly 1 time. Suppose we have a curve $\alpha$ that touched the fixed red curve only at point $P$.

\begin{center}
\includegraphics[scale=0.5]{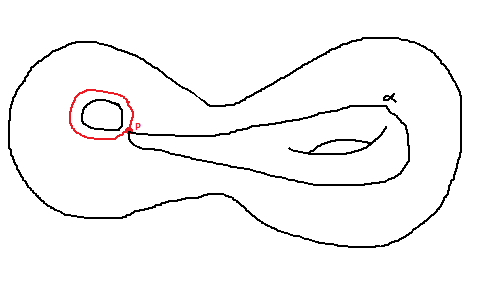}
\end{center}

We define $T_0^{+}$ operation be the operation that transforms the curve $\alpha$ into a new curve $\beta$ that goes around the red curve 1 time in counter-clockwise direction and then goes around the original curve $\alpha$. 

\begin{center}
\includegraphics[scale=0.5]{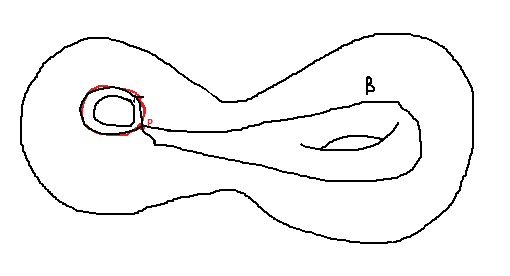}
\end{center}

Note that by another homotopy operation, $\beta$ can further be transformed into $\gamma$. 

\begin{center}
\includegraphics[scale=0.5]{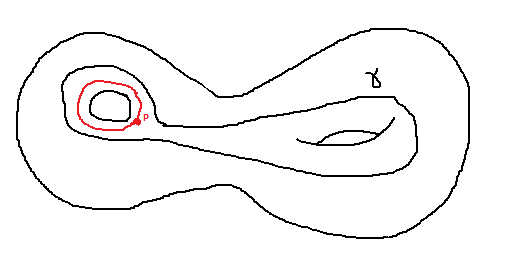}
\end{center}

Now we define the inverse operation of $T_0^{+}$. If we have a loop $\beta$ that goes around the red curve exactly one, and then goes around another $\alpha$ curve which only touches the red curve at $P$ then we define $T_0^{-}$ to be the operation that transforms $\beta$ into $\alpha$. Finally, we can define the operation $T_0$ to be either $T_0^{+}$ or $T_0^{-}$.
\end{defi}

\begin{lem}\label{lm8}
Suppose that $\alpha$ is transformed into $\beta$ by an operation $T_0$. Moreover, suppose $\zeta$ is another curve that doesn't intersect the red curve going around the boundary, then each term in the Goldman bracket $[\alpha, \zeta]$ can be transformed into a corresponding term in the bracket $[\beta, \zeta]$ by the operation $T_0$. (This correspondence is one-to-one).
\end{lem}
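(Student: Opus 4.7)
The plan is to exhibit, for each intersection point of $\alpha$ with $\zeta$, a matching intersection point of $\beta$ with $\zeta$, and then verify that the freely homotopic class produced by smoothing that intersection in $[\beta,\zeta]$ is obtained from the freely homotopic class produced by smoothing the corresponding intersection in $[\alpha,\zeta]$ by the very same operation $T_0$. Since the Goldman bracket is a signed sum indexed by intersection points, this immediately yields the claimed term-by-term correspondence.

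First I would treat the case $T_0 = T_0^{+}$. By the definition in \cref{def11}, the curve $\beta$ is a concatenation of (i) a small loop around the red boundary curve traversed once counter-clockwise and (ii) the original curve $\alpha$, joined at the point $P$. Because $\zeta$ is disjoint from the red curve and $P$ itself is (by hypothesis) the only point where $\alpha$ meets the red curve, we may isotope $\beta$ so that the ``red loop'' portion of $\beta$ lies in a thin collar neighborhood of the boundary that $\zeta$ avoids entirely. Consequently every intersection point of $\beta$ with $\zeta$ lies on the $\alpha$-portion of $\beta$, and we obtain a bijection
\[
\Phi:\alpha\cap\zeta\ \longrightarrow\ \beta\cap\zeta,
\]
which preserves local orientations, hence preserves the sign $\mathrm{sign}(p)$ appearing in \cref{def3}.

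Next I would analyze what smoothing a matched pair of intersections produces. Fix $q\in\alpha\cap\zeta$ and its image $\Phi(q)\in\beta\cap\zeta$. Resolving the crossing at $q$ produces a curve $\alpha_q$ formed by concatenating the two arcs of $\alpha$ and $\zeta$ at $q$ in the way prescribed by Goldman's rule; similarly resolving at $\Phi(q)$ produces a curve $\beta_q$. Using the collar description of $\beta$ above, $\beta_q$ is homotopic to the concatenation of the small counter-clockwise red loop with $\alpha_q$, joined at the single point $P$. Since $\zeta$ avoids the red curve, the new curve $\alpha_q$ still meets the red curve only at $P$, so $T_0^{+}$ is defined on it, and by construction $T_0^{+}(\alpha_q)=\beta_q$ as free homotopy classes. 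This establishes the one-to-one term-wise correspondence for $T_0^{+}$.

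The case $T_0 = T_0^{-}$ is immediate from the previous one: if $\alpha = T_0^{-}(\beta)$, then $\beta = T_0^{+}(\alpha)$, and the bijection produced above pairs each term of $[\beta,\zeta]$ with a term of $[\alpha,\zeta]$ via $T_0^{-}$. The step I expect to require the most care is the isotopy argument that justifies pushing the red-loop portion of $\beta$ into a boundary collar disjoint from $\zeta$ and verifying that the resulting resolved curve $\beta_q$ is genuinely freely homotopic to $T_0^{+}(\alpha_q)$; this is essentially a picture-level verification, but it needs to be stated carefully to ensure that the basepoint $P$ of the red loop and the resolution point $q$ do not interact, which is why the hypothesis that $\zeta$ is disjoint from the red curve is crucial.
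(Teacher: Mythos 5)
Your proposal is correct and follows essentially the same route as the paper: reduce to $T_0^{+}$ by symmetry, observe that $\alpha\cap\zeta = \beta\cap\zeta$ because $\zeta$ misses the red curve, and note that the two smoothed loops at a matched crossing differ precisely by prepending the red loop at $P$, hence by $T_0^{+}$. The paper's proof states this at the picture level (``clearly, the curve $\alpha_p\zeta$ can be transformed into $\beta_p\zeta$ by $T_0^{+}$''), whereas you fill in the collar-neighborhood isotopy, the sign-preservation of the bijection, and the check that the smoothed curve still meets the red curve only at $P$ so that $T_0^{+}$ applies to it; these are exactly the details the paper leaves implicit.
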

\begin{proof}
WLOG, assume that $\alpha$ is transformed into $\beta$ by $T_0^{+}$. Note that all intersection points of $\alpha$ and $\zeta$ are the same as those of $\beta$ and $\zeta$ since $\zeta$ doesn't intersect the red curve. Assume $Q$ is one of those intersection. Then clearly, the curve $\alpha_p\zeta$ can be transformed into $\beta_p\zeta$ by $T_0^{+}$. 

\begin{center}
\includegraphics[scale=0.6]{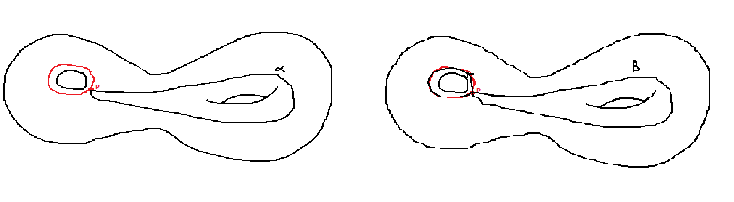}

\includegraphics[scale=0.6]{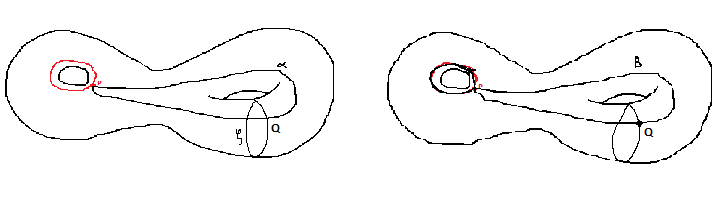}
\end{center}
\end{proof}

\begin{defi}
Now suppose we have a new red curve going around the boundary $2^1 = 2$ times. We define $T_1$ to be an operation that is similar to $T_0$, and the only difference is the new red curve.

\begin{center}
	
\includegraphics[scale=0.5]{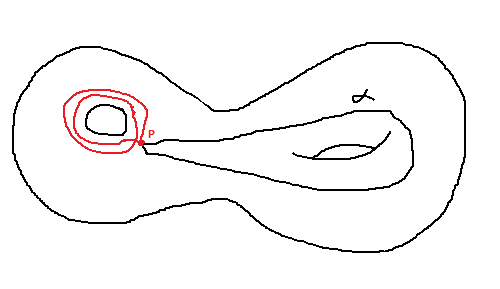}

\includegraphics[scale=0.5]{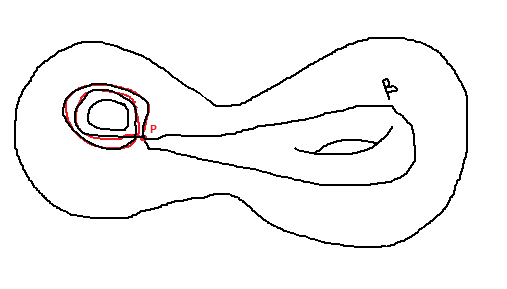}

\includegraphics[scale=0.5]{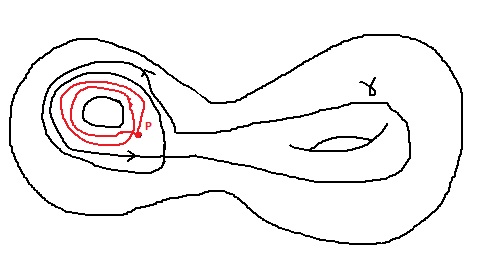}
	
\end{center}

Now instead of letting the red curve go around the boundary $1$ or $2$ times, we can let the red curve go around the boundary $2^n$ times for any non-negative integer $n$. Then we can define $T_n$ in a similar way that we defined $T_0$ and $T_1$.
\end{defi}

Then we have another version of \cref{lm8}

\begin{lem}\label{lm9}
Suppose that $\alpha$ can be transformed into $\beta$ by an operation $T_n$. Moreover, suppose $\zeta$ is another curve that doesn't intersect the red curve going around the boundary, then each term in the Goldman bracket $[\alpha, \zeta]$ can be transformed into a corresponding term in the bracket $[\beta, \zeta]$ by the operation $T_0$. (This correspondence is one-to-one).
\end{lem}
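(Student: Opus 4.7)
The plan is to adapt the proof of Lemma~\ref{lm8} term by term, replacing the single-wrap red curve used there by the $2^n$-fold-wrap red curve used to define $T_n$. First, because $\zeta$ does not intersect the red curve (regardless of how many times the red curve wraps the boundary), the intersection points of $\alpha$ with $\zeta$ coincide setwise with those of $\beta$ with $\zeta$, with matching local signs. This immediately yields a one-to-one correspondence between the terms of $[\alpha,\zeta]$ and those of $[\beta,\zeta]$: to each intersection point $Q$ we associate the pair $(\alpha_{Q}\zeta,\ \beta_{Q}\zeta)$.

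For each paired term, the idea is to localize the analysis near the touching point $P$ of $\alpha$ with the red curve. Write $\beta$ as the result of inserting the $2^n$-fold red loop into $\alpha$ at $P$. Since $Q\neq P$ and $\zeta$ is supported away from the collar neighborhood carrying the red curve, the same local insertion near $P$ takes $\alpha_{Q}\zeta$ to $\beta_{Q}\zeta$: inserting $\zeta$ at $Q$ and inserting the red loop near $P$ are independent local modifications on the surface, so they commute up to free homotopy. Moreover, the new curve $\alpha_{Q}\zeta$ still meets the red curve only at $P$, so the hypothesis required by Definition~\ref{def11} for applying an operation at $P$ is fulfilled, and the insertion is exactly a single wrap-and-follow move on $\alpha_{Q}\zeta$.

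The step I expect to be the main obstacle is the last identification: strictly speaking, $T_0$ is defined via a once-wrapping red curve, whereas here the red curve wraps $2^n$ times. I would resolve this by choosing, in the $T_n$-setup, the red curve to lie in a narrow collar of the distinguished boundary component that is disjoint from $\zeta$ and from a small ball around $Q$. With this choice, redrawing the pictures accompanying Lemma~\ref{lm8} — with the $2^n$-wrap drawn in place of the single wrap — shows diagrammatically that the prescribed wrap-and-follow move applied at $P$ to $\alpha_{Q}\zeta$ produces exactly $\beta_{Q}\zeta$, matching the one-step operation of Definition~\ref{def11}. Combining this with the term-by-term bijection from the first paragraph yields the lemma.
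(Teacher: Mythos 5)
Your argument is essentially the paper's: the paper gives no separate proof of Lemma~\ref{lm9}, presenting it as the $T_n$-analogue of Lemma~\ref{lm8}, whose proof (identify $\alpha\cap\zeta$ with $\beta\cap\zeta$ since $\zeta$ misses the red curve, then transform each loop product $\alpha_Q\zeta$ into $\beta_Q\zeta$ by the same wrap-and-follow move at $P$) is exactly what you reproduce with the $2^n$-fold wrap. Note only that the ``$T_0$'' in the statement is evidently a typo for $T_n$ (as needed later in Lemma~\ref{lm10}), so there is no need to force the single-wrap identification in your last paragraph --- the inserted $2^n$-wrap move \emph{is} $T_n$ by definition.
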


\begin{defi}
Two curves on $S$, $\alpha$ and $\beta$, are equivalent under equivalence relation $\sim_n$ if $\alpha$ can be transformed into $\beta$ by operations $T_m$ for some $m \geq n \in \mathbb{Z}$ or by homotopy operations. We can then extend this equivalence relation $\sim_n$ for linear combinations of curves instead of just single curves.
\end{defi}

\begin{lem} \label{lm10}
If two curves $\alpha \sim_n \beta$, then $[\alpha, \zeta] \sim_n [\beta, \zeta]$ 
\end{lem}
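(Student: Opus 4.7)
The plan is to reduce to the case of a single transformation step applied to a pair of individual curves, apply \cref{lm9} there, and then concatenate the resulting equivalences using the fact that $\sim_n$ is an equivalence relation. First, by the $\mathbb{Z}$-bilinearity of the Goldman bracket together with the term-by-term extension of $\sim_n$ from curves to linear combinations, I would reduce to the case in which $\alpha$ and $\beta$ are single curves on $S$.

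Next, by the definition of $\sim_n$, there is a finite chain $\alpha = \alpha_0, \alpha_1, \dotsc, \alpha_k = \beta$ in which each consecutive pair is related either by a free homotopy or by an operation $T_{m_i}$ with $m_i \geq n$. I would handle the two kinds of steps separately. For a homotopy step, the curves $\alpha_i$ and $\alpha_{i+1}$ lie in the same free homotopy class, so the brackets $[\alpha_i, \zeta]$ and $[\alpha_{i+1}, \zeta]$ agree term-by-term in the Goldman algebra and the $\sim_n$-equivalence is trivial. For a step of the form $\alpha_{i+1} = T_{m_i}(\alpha_i)$, I would first replace $\zeta$ by a freely homotopic representative $\zeta'$ lying outside a collar neighborhood of the distinguished boundary component. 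This reduction is legitimate because the red curve used in $T_{m_i}$ is supported inside such a collar, and because the Goldman bracket depends only on the free homotopy class of $\zeta$. With $\zeta'$ disjoint from the red curve, \cref{lm9} produces a bijection between the terms of $[\alpha_i, \zeta']$ and those of $[\alpha_{i+1}, \zeta']$, with each matched pair related by a single application of $T_{m_i}$. Since $m_i \geq n$, this yields $[\alpha_i, \zeta] \sim_n [\alpha_{i+1}, \zeta]$.

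Finally I would chain the per-step equivalences using transitivity of $\sim_n$ on linear combinations to conclude $[\alpha, \zeta] \sim_n [\beta, \zeta]$. The main point where some care is required is the homotopy of $\zeta$ off the collar containing the red curve; this uses essentially only that the red curve is boundary-parallel, and once granted, the rest of the argument is a routine assembly of \cref{lm9} with the definition of $\sim_n$ and the bilinearity of the bracket.
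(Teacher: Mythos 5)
Your argument is correct and follows the same route as the paper's (very terse) proof: both rest on \cref{lm9} together with the observation that $\zeta$ can be homotoped off the red curve near the boundary, and you simply make explicit the chain-decomposition of $\sim_n$ and the handling of homotopy steps and $T_m$ steps separately. The only superfluous step is the initial reduction to single curves---the lemma already assumes $\alpha$ and $\beta$ are individual curves, so bilinearity is only needed for interpreting $\sim_n$ on the resulting brackets, which are linear combinations.
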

\begin{proof}
The proof follows from \cref{lm9}, and the fact that for any $\zeta$ there always exists a curve $\zeta'$, which is homotopic to $\zeta$ and doesn't intersect the curve that going around the boundary a certain number of times.
\end{proof}

\begin{defi}\label{def14}
We define $X_n$ as the set of all equivalence classes of curves obtained from the equivalence relation $\sim_n$. Now let $S_n$ be the set of all linear combinations of elements of $X_n$ with coefficients in $\mathbb{Q}$. $S_n$ is a $\mathbb{Q}$-module. 

Let the $\mathbb{Q}$-module homomorphism $\delta_n$ from the Goldman algebra to $S_n$ be the map that maps each freely homotopy class of curves to the equivalence classes in $X_n$ of one of the representation curve of that freely homotopy class. This map is well-defined since any two freely homotopic curves are equivalent under relation $\sim_n$.

By \cref{lm10}, we can define a Lie bracket on $S_n$: $[\overline{\alpha}, \overline{\beta}] = \overline{[\alpha, \beta]}$, where $\overline{\gamma}$ is the $\sim_n$ equivalence classes of $\gamma$. This definition doesn't depend on the representative curves $\alpha$ or $\beta$. 
\end{defi}

\begin{lem}\label{lm11}
By \cref{def14}, $S_n$ is a Lie algebra, and $\delta_n$ is obviously a Lie algebra homomorphism. Let $I_n$ be the kernel of the map $\delta_n$. Then we have a descending chain of ideals $I_n$ because any two curves that are $\sim_{n+1}$-equivalent are also $\sim_{n}$-equivalent.
\end{lem}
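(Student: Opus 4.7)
The plan is to verify the four assertions of the lemma in sequence, since each reduces to earlier work. First I would check that the bracket $[\overline{\alpha},\overline{\beta}] := \overline{[\alpha,\beta]}$ on $S_n$ is well-defined; this is exactly the content of \cref{lm10}, which guarantees that replacing $\alpha$ or $\beta$ by a $\sim_n$-equivalent representative replaces $[\alpha,\beta]$ by a $\sim_n$-equivalent linear combination (the equivalence being extended $\mathbb{Q}$-linearly as in \cref{def14}). Once the bracket is well-defined, $\mathbb{Q}$-bilinearity, anti-symmetry, and the Jacobi identity on $S_n$ descend termwise from the corresponding properties of the Goldman bracket via the quotient $\mathbb{Q}[\hat{\pi_0}] \twoheadrightarrow S_n$, so $S_n$ is a Lie algebra.

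Second, $\delta_n$ was defined as the $\mathbb{Q}$-linear extension of the map sending a free homotopy class to its $\sim_n$-equivalence class, so it is automatically a $\mathbb{Q}$-module homomorphism; that it preserves brackets is immediate from the definition $[\overline{\alpha},\overline{\beta}] = \overline{[\alpha,\beta]}$ on generators, extended bilinearly. Therefore $I_n = \ker \delta_n$ is an ideal of the Goldman algebra: for $\gamma \in I_n$ and any $\eta \in \mathbb{Q}[\hat{\pi_0}]$ one has $\delta_n([\gamma,\eta]) = [\delta_n(\gamma),\delta_n(\eta)] = [0,\delta_n(\eta)] = 0$, so $[\gamma,\eta] \in I_n$, which is the standard fact that the kernel of a Lie algebra homomorphism is an ideal.

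The substantive claim is the descending chain $I_{n+1} \subseteq I_n$, which I would establish via the observation that $\sim_{n+1}$ refines $\sim_n$: if $\alpha \sim_{n+1} \beta$ then by \cref{def14} there is a finite sequence of homotopies and operations $T_m$ with $m \geq n+1$ transforming $\alpha$ into $\beta$, and since $m \geq n+1 \geq n$, the same sequence witnesses $\alpha \sim_n \beta$. Thus every $\sim_{n+1}$-equivalence class sits inside a single $\sim_n$-equivalence class, and the assignment $\overline{\gamma}_{n+1} \mapsto \overline{\gamma}_n$ defines a well-defined $\mathbb{Q}$-linear surjection $\pi : S_{n+1} \to S_n$ satisfying $\delta_n = \pi \circ \delta_{n+1}$ on generators and hence on all of $\mathbb{Q}[\hat{\pi_0}]$. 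Consequently $\ker \delta_{n+1} \subseteq \ker \delta_n$. The only thing to watch is the direction of refinement: larger $n$ \emph{restricts} the allowed operations $T_m$, so $\sim_n$ becomes \emph{finer} as $n$ grows, the quotient $S_n$ becomes \emph{larger}, and so the kernel $I_n$ becomes \emph{smaller} -- precisely the descending chain asserted. No genuine obstacle arises; the lemma is a formal consequence of \cref{lm10} and the trivial inequality $n+1 \geq n$.
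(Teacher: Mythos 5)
Your proposal is correct and follows the same route the paper intends: the paper gives no separate proof of this lemma, instead folding the argument into the statement itself (well-definedness of the bracket via \cref{lm10}, the Lie algebra axioms and homomorphism property descending from the Goldman bracket, and the chain $I_{n+1}\subseteq I_n$ from the observation that $m\geq n+1$ implies $m\geq n$). Your factoring $\delta_n = \pi\circ\delta_{n+1}$ through the natural surjection $S_{n+1}\twoheadrightarrow S_n$ is a clean way to make the kernel inclusion explicit, and your caution about the direction of refinement (larger $n$ means fewer allowed $T_m$, finer relation, larger quotient, smaller kernel) correctly resolves the one place where a sign error is easy to make.
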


\begin{lem}\label{lm12}
Let $\pi_1(S, P)$ be the fundamental group of $S$ with base point $P$, and $c$ be an element of $\pi_1(S, P)$ that is the homotopy class of curves containing the chosen boundary (in counter-clockwise direction) (the red curve defined in \cref{def11}). 

Then any closed curve that is in the homotopy class of curves $cxc^{-1}x^{-1} \in \pi_1(S, P)$ is $\sim_0$ to the trivial loop. 

Moreover, for any $x_i \in \pi_1(S, P)$, and integers $m_i \geq n$, the curve that is in the homotopy class 
\begin{equation*}
\gamma = c^{2^{m_1}} x_1 c^{-2^{m_1}} x_1^{-1} c^{2^{m_2}} x_2 c^{-2^{m_2}}x_2^{-1}\dotsc c^{2^{m_k}} x_k c^{-2^{m_k}} x_k^{-1} \in \pi_1(S, P)
\end{equation*}
is $\sim_n$ to the trivial loop. 

Therefore, if 
\begin{multline*}
C_n = \{gc^{2^{m_1}} x_1 c^{2^{-m_1}} x_1^{-1} c^{2^{m_2}} x_2 c^{2^{-m_2}}x_2^{-1}\dotsc c^{2^{m_k}} x_k c^{2^{-m_k}} x_k^{-1}g^{-1} \\ \text{ such that } x_i, g \in \pi_1(S, P), m_i \geq n \}
\end{multline*}
then $[x] - [y] \in I_n$ with every $x, y$ in the normal subgroup $C_n$ of $\pi_1(S, P)$, where $[x]$ is the freely homotopic class of curves so that one of the curve in this class is in the homotopy class $x \in \pi_1(S, P)$. As a result, $I_n$ is not one of the ideals that we considered in \cref{st3}.
\end{lem}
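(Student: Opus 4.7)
The plan is to prove Parts 1--3 by explicit geometric manipulation with the $T_m$-operations and then to deduce the final assertion by exhibiting an element of $\ker \mathbf{Ab}_{\mathbb{Q}}$ that fails to lie in $I_n$.

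For Part 1, I carry out a three-step reduction. Represent $cxc^{-1}x^{-1}$ based at $P$ as a concatenation $c \cdot \gamma_1$, where $\gamma_1$ is a representative of $xc^{-1}x^{-1}$ pushed slightly off the boundary so that it meets the red curve only at $P$; this is possible because, although $\gamma_1$ is freely homotopic to $c^{-1}$ and so must wrap the boundary once, the wrap can be performed at a larger radius. Applying $T_0^-$ strips the leading $c$, giving $[cxc^{-1}x^{-1}] \sim_0 [xc^{-1}x^{-1}]$. Next, $xc^{-1}x^{-1}$ is a conjugate of $c^{-1}$ in $\pi_1(S,P)$, so free homotopy yields $[xc^{-1}x^{-1}] \sim_0 [c^{-1}]$. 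Finally, choose a representative of $c^{-1}$ meeting the red curve only at $P$ and apply $T_0^+$ to produce a based loop representing $c \cdot c^{-1}$, which is null-homotopic, giving $[c^{-1}] \sim_0 [e]$. For Part 2, I iterate the three-step argument on each factor $c^{2^{m_i}} x_i c^{-2^{m_i}} x_i^{-1}$ with $T_{m_i}$ in place of $T_0$; the hypothesis $m_i \geq n$ ensures every move lies in $\sim_n$, and a short induction on $k$ collapses the factors one at a time (using \cref{lm10} to localize each move within the ambient concatenation). Part 3 is then immediate: if $x = g\eta g^{-1} \in C_n$ with $\eta$ as in Part 2, then the free homotopy class $[x]$ equals $[\eta]$, and Part 2 gives $\delta_n([x]) = \delta_n([e])$; hence $[x] - [y] = ([x] - [e]) - ([y] - [e]) \in \ker\delta_n = I_n$.

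For the final claim, recall that an ideal is symmetric precisely when it contains $\ker\mathbf{Ab}_{\mathbb{Q}}$, so it suffices to display $\alpha \in \ker\mathbf{Ab}_{\mathbb{Q}}$ with $\alpha \notin I_n$. For $n \geq 1$, take $\alpha = [c^{2^{n-1}}] + [c^{-2^{n-1}}] - 2[e]$: the abelianizations of the two nontrivial terms cancel, so $\alpha \in \ker\mathbf{Ab}_{\mathbb{Q}}$. On the other hand, the winding number of a loop around the chosen boundary, reduced modulo $2^n$, is invariant under every $T_m$-move with $m \geq n$ (such a move changes the winding by $\pm 2^m \equiv 0 \pmod{2^n}$) and under free homotopy, whereas $c^{\pm 2^{n-1}}$ has winding $\pm 2^{n-1} \not\equiv 0 \pmod{2^n}$; hence neither $\delta_n([c^{2^{n-1}}])$ nor $\delta_n([c^{-2^{n-1}}])$ equals $\delta_n([e])$ in $S_n$, and $\delta_n(\alpha)$ is a nonzero linear combination of basis elements, so $\alpha \notin I_n$. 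For $n = 0$ the winding invariant is vacuous, and I would instead use $\alpha = [a_1 a_2 a_1^{-1} a_2^{-1}] - [e]$, whose two terms share trivial abelianization but whose difference is detected as nontrivial in the quotient of $\pi_1(S)$ by the normal closure of $c$, which together with free homotopy controls $\sim_0$. The main obstacle I anticipate is the localization step in Part 2: each $T_{m_i}$-move presupposes a curve meeting the corresponding red-$2^{m_i}$ curve only at $P$, so one must arrange the representatives of the $x_i$ and the several red curves to intersect pairwise only at $P$; I expect this to be resolvable by careful positioning of representatives combined with \cref{lm10}.
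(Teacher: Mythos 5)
Your geometric reduction of Part 1 (strip the leading $c$ with a $T_0^-$, freely homotope $xc^{-1}x^{-1}$ to $c^{-1}$, then cancel with a $T_0^+$) is essentially the same two-$T_0$-moves-plus-two-homotopies maneuver that the paper illustrates with its pictures, and your plan for iterating it factor by factor in Part 2 (with $T_{m_i}$ replacing $T_0$) is the natural generalization the paper gestures at with ``similarly.'' The positioning issue you flag there is real but resolvable, and the reduction of the final claim to exhibiting an element of $\ker\mathbf{Ab}_{\mathbb{Q}}$ outside $I_n$ is the right strategy, since a symmetric ideal is by definition a preimage under the surjection $\mathbf{Ab}_{\mathbb{Q}}$ and so must contain its kernel.

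The step that breaks is the witness you propose for $n\ge 1$: the element $\alpha = [c^{2^{n-1}}] + [c^{-2^{n-1}}] - 2[e]$ is \emph{not} in $\ker\mathbf{Ab}_{\mathbb{Q}}$. Recall that $\mathbf{Ab}_{\mathbb{Q}}$ is a $\mathbb{Q}$-module homomorphism into the free $\mathbb{Q}$-module $\mathbb{Q}[A(n)]$, sending each free homotopy class to the corresponding abelianized word regarded as a \emph{basis element}. The classes $[c^{2^{n-1}}]$, $[c^{-2^{n-1}}]$, $[e]$ abelianize to $c^{2^{n-1}}$, $c^{-2^{n-1}}$, and $e$, which are three distinct elements of the group $A(n)$ (because $c$ has infinite order in the free abelian group $A(n)$) and hence three linearly independent basis vectors of $\mathbb{Q}[A(n)]$. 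Their $\mathbb{Q}$-linear combination $c^{2^{n-1}} + c^{-2^{n-1}} - 2e$ does not vanish; the cancellation you invoke is the multiplicative identity $c^{2^{n-1}}\cdot c^{-2^{n-1}} = e$ inside the group $A(n)$, which is a different operation from addition in the module $\mathbb{Q}[A(n)]$. So the winding-number argument, whatever its merits, is being applied to an element that was never in the kernel to begin with, and the final claim is left unproved for $n\ge 1$. To repair it you need a difference of two classes whose abelianized words are literally \emph{equal}: your own $n=0$ choice $[a_1a_2a_1^{-1}a_2^{-1}] - [e]$ has the right shape (both terms abelianize to $e$), and the disk-attaching argument used in \cref{lm13} and \cref{thm5}, which passes to $\pi_1(S)/\langle\langle c^{2^{n}}\rangle\rangle$ or $\pi_1(S)/\langle\langle c^{2^{n+1}}\rangle\rangle$, is the natural tool to show that such a commutator remains non-equivalent to $e$ under $\sim_n$ for every $n$.
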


\begin{proof}
Consider the following curve represented by $cxc^{-1}x^{-1}$ in the fundamental group.

\begin{center}
\includegraphics[scale=0.5]{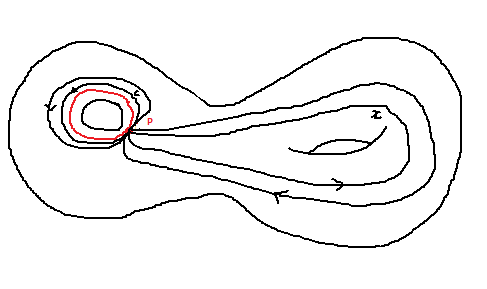}
\end{center}

This curve is homotopic to the curve $\alpha_0$

\begin{center}
\includegraphics[scale=0.5]{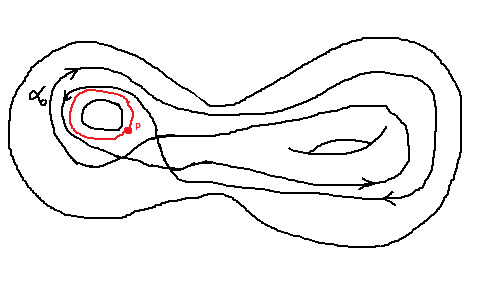}
\end{center}

We can perform the following operations on $\alpha_0$: $T_0$ operation, homotopy operation, then another $T_0$ operation, and finally a homotopy operation to transform $\alpha_0$ into the trivial loop.

\begin{center}
\includegraphics[scale=0.5]{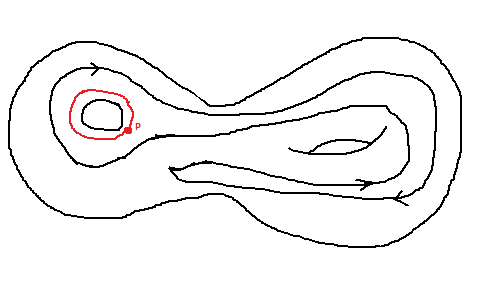}

\includegraphics[scale=0.5]{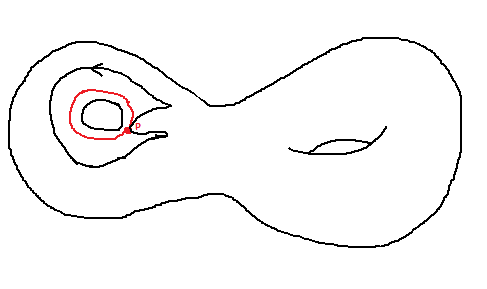}

\includegraphics[scale=0.5]{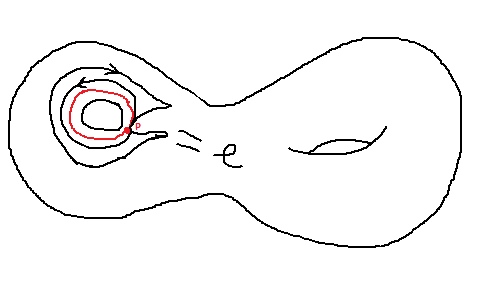}
\end{center}

Similarly, we can prove the more general statement for the equivalence relation $\sim_n$ instead of $\sim_0$. 
\end{proof}

\begin{lem}\label{lm13}
The freely homotopic class $\gamma$ containing the curve going around the boundary $2^n$ times belongs to $I_n$ and doesn't belong to $I_{n+1}$ for $n \in \mathbb{Z} \geq 0$. Therefore, $I_n$ is a strictly descending chain of ideals.
\end{lem}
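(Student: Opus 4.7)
The plan is to introduce a winding-number invariant attached to the chosen boundary component and show that $\gamma$ and the trivial loop agree modulo $2^n$ but disagree modulo $2^{n+1}$. Let $c\in\pi_1(S,P)$ denote the based loop going once around the chosen boundary, so that going around it $2^n$ times corresponds to $c^{2^n}$. In the case $b\ge 2$, the class $[c]$ is nonzero in $H_1(S;\mathbb{Z})$ and extends to a $\mathbb{Z}$-basis, so the winding number $w([\alpha])\in\mathbb{Z}$ defined as the $[c]$-coordinate of $[\alpha]\in H_1(S;\mathbb{Z})$ is a well-defined free-homotopy invariant with $w(c^k)=k$. In the remaining case $b=1$, where $[c]$ is null-homologous, I would instead use the image of a based representative in the one-relator quotient $Q_n:=\pi_1(S)/\langle\langle c^{2^{n+1}}\rangle\rangle$; Magnus's theorem guarantees that $c$ has order exactly $2^{n+1}$ in $Q_n$ (since $c$ is not a proper power in the free group $\pi_1(S)$), so in particular $c^{2^n}$ is nontrivial in $Q_n$.

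Next I would verify that this invariant is preserved under the appropriate $\sim_n$ equivalence. Homotopy preserves the conjugacy class in $\pi_1(S)$ and hence the invariant. A $T_m^\pm$ move replaces a based representative $\alpha$ by $c^{\pm 2^m}\alpha$, which in the homological setting shifts $w$ by $\pm 2^m$, a multiple of $2^n$ whenever $m\ge n$; in the quotient setting the multiplier $c^{\pm 2^m}$ lies in $\langle\langle c^{2^{n+1}}\rangle\rangle$ whenever $m\ge n+1$, so the conjugacy class in $Q_n$ is unchanged.

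With this invariant in hand the two halves of the lemma follow. For $\gamma\in I_n$: applying $T_n^+$ to the constant loop at $P$ yields a curve that wraps $2^n$ times around the boundary, hence a representative of $\gamma$, so $\gamma\sim_n\text{trivial}$ and therefore $\delta_n(\gamma)=\delta_n(\text{trivial})=0$ in $S_n$. For $\gamma\notin I_{n+1}$: if we had $\gamma\sim_{n+1}\text{trivial}$ the invariant would force $2^n\equiv 0\pmod{2^{n+1}}$ (for $b\ge 2$) or $c^{2^n}=1$ in $Q_n$ (for $b=1$), both of which fail. Together with \cref{lm11}, this yields the strictly descending chain $I_0\supsetneq I_1\supsetneq I_2\supsetneq\cdots$.

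The main obstacle is the case $b=1$, where the naive homological winding number is identically zero and one must pass to a non-abelian one-relator quotient to retain enough information about $c^{2^n}$; the cleanest route is Magnus's theorem on one-relator groups, though one could alternatively exhibit a specific finite quotient of $\pi_1(S)$ in which $c$ has order $2^{n+1}$. A secondary check is that the $T_m^+$ move---despite its geometric description involving an ``other curve'' $\alpha$---genuinely amounts to left multiplication by $c^{2^m}$ on a based representative, so that its effect on the invariant is exactly as claimed.
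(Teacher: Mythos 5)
Your proposal is correct, and its core device --- passing to the quotient $\pi_1(S)/\langle\langle c^{2^{n+1}}\rangle\rangle$ and showing $c^{2^{n}}$ survives --- coincides with the paper's, which realizes the same quotient topologically by attaching a disk along the curve wrapping $2^{n+1}$ times and invoking Van Kampen. Where you genuinely improve on the paper's proof is the step establishing nontriviality of $c^{2^{n}}$ in the quotient: the paper passes from ``$c^{2^{n}}$ lies in the normal closure $N$ of $c^{2^{n+1}}$'' directly to ``$c^{2^{n}}=gc^{2^{n+1}}g^{-1}$'' and then contradicts by cyclically reduced length, but this identification of the normal closure with the set of conjugates is false (the normal closure contains arbitrary products of conjugates of $c^{\pm 2^{n+1}}$). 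Your appeal to the one-relator torsion theorem (Karrass--Magnus--Solitar / Newman) is exactly the right way to certify that $c$ has order $2^{n+1}$ in $Q_n$, hence $c^{2^{n}}\neq 1$ there; this repairs the gap. To be fully airtight you should cite the standard fact that a boundary curve on a compact surface with boundary is not a proper power in the free group $\pi_1(S)$ --- this is clear when $b\ge 2$ (take $c$ among a free basis) and is a well-known, but not entirely trivial, fact about the surface relator $\prod[a_i,b_i]$ when $b=1$. Your secondary route via the $H_1$-winding number for $b\ge 2$ is a nice elementary alternative and would also feed directly into Theorem~\ref{thm5}, which already restricts to that case; it does not cover $b=1$, as you note, so the one-relator argument is needed there.
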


\begin{proof}
By a single $T_n$ operation, we can transform any curve in the freely homotopic class $\gamma$ into the trivial loop, so $\gamma \in I_n$. Now we will prove that $\gamma \not \in I_{n + 1}$
	
Attaching a disk to the surface $S$ by gluing the disk's boundary with the curve that goes around the chosen boundary (of the surface $S$) $2^{n+1}$ times to obtain a new topological space $S'$. If we can transform one curve into another curve on the original surface $S$ by an operation $T_m$ for $m \geq n+1$, then we can also transform one curve into the other by a homotopy in $S'$ by moving the curve through the disk attached instead. 

Let $c \in \pi_1(S)$ be the homotopy class of curves containing the boundary that we have chosen. Then, by Van Kampen's theorem, the fundamental group of the new space is $\pi_1(S)/N$, where $N$ is the normal subgroup generated by $c^{2^n}$. If $\gamma \in I_{n+1}$, then, by our previous argument, $c^{2^n}$ must be homotopic to the trivial loop $S'$. Hence, $c^{2^n} \in N$ or $c^{2^n} = gc^{2^{n+1}}g^{-1}$, for some $g \in \pi_1(S)$. Contradiction. Therefore, $\gamma \not \in I_{n + 1}$
\end{proof}

\begin{theorem}\label{thm5}
For compact surface $S$ with more than one boundary component, we can choose our fixed boundary so that the homotopy class of curves containing this boundary is a generator of the fundamental group of $S$, which is a free group. In this case, the intersection of ideals $\bigcap\limits_{n \geq 0} I_n$ is $\{0_{\mathbb{Q}[\hat{\pi_0}]}\}$
\end{theorem}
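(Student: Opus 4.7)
The plan is to produce, for every potential non-zero element $\eta = \sum_{i=1}^k q_i u_i \in \mathbb{Q}[\hat{\pi_0}]$ with the $u_i$ distinct and $q_i \neq 0$, a sufficiently large $n$ witnessing $\eta \notin I_n$. Since $S$ has at least two boundary components, $\pi_1(S, P) = F_k$ is free and we can pick the chosen boundary loop $c$ to be a free generator. The core idea is that the equivalence $\sim_n$ is compatible with a geometric quotient of $S$: attach a $2$-disk $D^2$ to $S$ along a curve in the class $c^{2^n}$ to obtain a surface $S'_n$. By Van Kampen, $\pi_1(S'_n) = F_k / \genby{\genby{c^{2^n}}} \cong F_{k-1} * (\mathbb{Z}/2^n\mathbb{Z})$, where $F_{k-1}$ is free on the remaining generators.

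The key reduction is that $\sim_n$-equivalence in $S$ implies free homotopy in $S'_n$. For any $m \geq n$, the red curve used by $T_m$ lies in the free homotopy class of $c^{2^m} = (c^{2^n})^{2^{m-n}}$, which is null-homotopic in $S'_n$ since $c^{2^n}$ bounds the attached disk. A $T_m$-move, which inserts or removes a loop around this red curve, can therefore be realised by a free homotopy that slides the inserted loop across the attached disk. It follows that the inclusion $S \hookrightarrow S'_n$ induces a well-defined $\mathbb{Q}$-linear map $\mathbb{Q}[\hat{\pi_0}(S)] \to \mathbb{Q}[\hat{\pi_0}(S'_n)]$ that factors through $\delta_n$. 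In particular, if $\eta \in I_n$ then $\sum q_i \bar{u}_i = 0$ in $\mathbb{Q}[\hat{\pi_0}(S'_n)]$, where $\bar{u}_i$ denotes the image of $u_i$.

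It therefore suffices to show that for $n$ large enough, the classes $\bar{u}_1, \ldots, \bar{u}_k$ in $\hat{\pi_0}(S'_n)$ are pairwise distinct, since they would then form part of a $\mathbb{Q}$-basis and linear independence forces all $q_i = 0$. Represent each $u_i$ by a cyclically reduced word $w_i \in F_k$, and let $N$ be the maximum absolute value of a $c$-exponent appearing in any $w_i$. Choose $n$ with $2^n > 2N$; then reduction modulo $2^n$ is injective on $\{-N, \ldots, N\}$ and maps no non-zero value to $0$. Hence each $w_i$ remains cyclically reduced as an element of $F_{k-1} * (\mathbb{Z}/2^n\mathbb{Z})$, and the normal form theorem for free products shows that distinct cyclically reduced cyclic words still represent distinct conjugacy classes. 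Thus $\bar{u}_1, \ldots, \bar{u}_k$ are distinct, completing the argument.

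I expect the main obstacle to be the geometric verification in the second paragraph---namely, that a $T_m$-move for $m \geq n$ genuinely descends to a free homotopy in $S'_n$ (requiring a careful drawing of the isotopy pushing the inserted red loop across the disk $D^2$). The combinatorial argument in the third paragraph, while requiring care with cyclic normal forms in a free product, is essentially elementary once this geometric setup is in place.
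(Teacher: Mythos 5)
Your argument is correct and follows the paper's approach: attach a disc along $c^{2^n}$, use Van Kampen to identify $\pi_1(S'_n)$ with $F_{k-1}*(\mathbb{Z}/2^n\mathbb{Z})$ (using that $c$ is a free generator), observe that $\sim_n$-equivalence implies free homotopy in $S'_n$, and take $n$ large enough that the $c$-exponents survive the quotient. The differences are cosmetic: the paper first pigeonholes a nonzero element of $\bigcap_{n\geq 0} I_n$ down to a single pair of $\sim_n$-equivalent free homotopy classes and then manually unwinds the resulting conjugation identity $\overline{agbg^{-1}}=\overline{e}$ in $\pi_1(S)/N$ by tracking the extremal $c$-blocks (with a separate abelianization trick when only one such block survives), whereas you keep all $k$ classes at once and appeal directly to the conjugacy/normal-form theorem for free products, which is a cleaner packaging of the same combinatorial fact.
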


\begin{proof}
Suppose by contradiction that there is some non-zero element in $\bigcap\limits_{n \geq 0} I_n$. Then there will be $2$ curves $\alpha$ and $\beta$ with different freely homotopic classes that are $\sim_n$ equivalent for each $n \in \mathbb{Z} \geq 0$. Suppose $\alpha$ and $\beta$ are in homotopy classes $a$ and $b$, which are elements of the fundamental group based at $P$, $\pi_1(S) = \pi_1(S, P)$. Then choose $n$ big enough such that the sum of absolute value of all exponents $k$ of subwords of the form $c^k$ in both $a$ and $b$ less than $2^{n-1}$. (Note that elements in $\pi_1(S)$ are reduced words).

Again, we attach to $S$ the disk whose boundary is the curve going around the boundary of the original surface $2^n$ times. Then $\alpha$ must be homotopic to $\beta$ in the new topological space $S'$.

For every reduced word in $\pi_1(S)$, $\dotsc c^{m_1}\dotsc c^{m_2}\dotsc \in \pi_1(S)$, we consider the corresponding word $\dotsc c^{m_1\mods 2^n}\dotsc c^{m_2\mods 2^n}\dotsc$, where we take $\mods 2^n$ of the exponents of subwords of the form of $c^i$. Let $G_n$ be the group of all of these corresponding words. We then have a natural surjective group homomorphism (or projection) $p$ from $\pi_1(S)$ to $G_n$. Every element in normal subgroup $N$ generated by $c^{2^n}$ is mapped to $\widetilde{e}$, unit element in $G_n$. As a result, there exist a group homomorphism $f$ from $\pi_1(S)/N$ to $G_n$ so that the following diagram commutes:

\[
\begin{tikzcd}
& \pi_1(S)/N \arrow{dr}{f} \\
\pi_1(S) \arrow{ur}{p_1} \arrow{rr}{p} && G_n
\end{tikzcd}
\]

Let $p_1(x) = \overline{x}$, and $f(\overline{x}) = \widetilde{x} = p(x)$.
Because $\alpha$ and $\beta$ are homotopic in the new space $S'$, $\overline{a}$ and $\overline{b}$ are conjugate in $\pi_1(S)/N$. Therefore, $\overline{agbg^{-1}} = \overline{e}$ for some $g \in \pi_1(S)$. 

Now we can assume that $g$ and therefore $g^{-1}$ only have subword $c^m$ in the $g$ with $m \in \mathbb{Z}$, and $|m| \leq 2^{n-1}$ because we can replace $m$ by $m'= m \mod 2^n$, with $0 \leq m' \leq 2^n-1$ and if $m'$ is still bigger than $2^{n-1}$, we then can replace $m$ by $m'' = m' - 2^n$. With these replacements, the identity $\overline{agbg^{-1}} = \overline{e} \in \pi_1(S)/N$ stays the same.

\vspace{2mm}
Now project two sides of the identity $\overline{agbg^{-1}} = \overline{e} \in \pi_1(S)/N$ onto $G_n$ by the map $f$, we got: $\widetilde{agbg^{-1}} = \widetilde{e}$. Note that $agbg^{-1}$ cannot be $e$ because the free homotopy classes of $\alpha$ and $\beta$ are different. 

Therefore, $agbg^{-1} = y$ so that $y = x_1c^{2^{m_1}}y_1c^{2^{m_2}}z_1$ is a reduced word in $\pi_1(S)$ with $p(y) = \widetilde{e}$, and $c^{2^{m_1}}$ is the first subword of the form $c^{2^t}$ of $y$, and $c^{2^{m_2}}$ is the last subword of the form $c^{2^t}$ of $y$ if $y$ has at least $2$ subwords of the form $c^{2^t}$ or $c^{2^{m_2}} = e$ otherwise. Because $y \neq e$ and $p(y) = \widetilde{e}$ , $2^n$ must divide $2^{m_1}$, and $m_1 \geq n$. Similarly, if $c^{2^{m_2}} \neq e$, then we also have $m_2 \geq n$.

Now we have $e = g^{-1}a^{-1}x_1c^{2^{m_1}}y_1c^{2^{m_2}}z_1gb^{-1}$, where the words $x_1$ and $z_1$ have no letter $c$. Because sum of the exponents of any subwords of the form $c^t$ in $a$ and in $g^{-1} $ is less than $2^n$, $c^{2^{m_1}}$ will never be cancelled by $g^{-1}a^{-1}x_1$. If $c^{m_2} \neq e$, then, by a similar argument, $c^{2^{m_2}}$ also cannot be cancelled by $z_1gb^{-1}$. 

If, however, $c^{m_2}  = e$, then our previous identity must have the form $e = g^{-1}a^{-1}x_1c^{2^{m_1}}y_1gb^{-1}$, and $x_1$ and $y_1$ contain no $c$ letter. Now consider the abelianization of both sides, we got: $e = \mathbf{Ab}(a)^{-1}\mathbf{Ab}(x_1)\mathbf{Ab}(c^{2^{m_1}})\mathbf{Ab}(y_1)\mathbf{Ab}(b)^{-1}$ ($g$ and $g^{-1}$ will have the abelianizations cancelled in the identity).

Now if we compare the exponents with base $c$ in both sides, we will get $0 = -exp(a) + 2^{m_1} - exp(b)$, where $exp(t)$ is the exponent with base $c$ of the abelianization of the word $t$. By the choice of $n$, we have $2^{m_1} \geq 2^n > 2^{n-1} > exp(a) + exp(b)$. Contradiction.

As a result, the identity $e = g^{-1}a^{-1}x_1c^{2^{m_1}}y_1c^{2^{m_2}}z_1gb^{-1}$ cannot hold. Therefore, we finish our proof by contradiction.
\end{proof}

\newpage
\begin{theorem}
There is a strictly descending chain of ideals that haven't been considered in \cref{st3}. For surface with more than one boundary component, the intersection of these ideals is zero.
\end{theorem}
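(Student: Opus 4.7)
The plan is to recognise that the theorem is essentially a synthesis of the chain of results proved immediately above, so my work is in assembling them in the right order rather than introducing new machinery. The desired chain is $\{I_n\}_{n \geq 0}$ from \cref{lm11}, where $I_n = \ker \delta_n$ is already established to be an ideal of $\mathbb{Q}[\hat{\pi_0}]$. The inclusion $I_{n+1} \subseteq I_n$ is built into the definitions: any composition of $T_m$-operations with $m \geq n+1$ is \emph{a fortiori} a composition with $m \geq n$, so $\sim_{n+1}$ refines $\sim_n$ and the kernels form a descending chain.

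To promote this to strict descent, I would invoke \cref{lm13}: the freely homotopic class $\gamma_n$ of a curve wrapping the chosen boundary $2^n$ times lies in $I_n$ (it collapses to the trivial loop under a single $T_n^-$) but not in $I_{n+1}$, so $I_{n+1} \subsetneq I_n$ for each $n \geq 0$. This immediately gives the strictly descending chain claimed in the first sentence of the theorem.

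To argue that no $I_n$ sits among the symmetric ideals classified in \cref{st3}, I would cite the last assertion of \cref{lm12}: $I_n$ contains every difference $[x]-[y]$ with $x,y$ in the normal subgroup $C_n$ generated by conjugates of $c^{2^m}xc^{-2^m}x^{-1}$ with $m \geq n$. These particular elements of $\ker \mathbf{Ab}_\mathbb{Q}$ belong to $I_n$, but a preimage $\mathbf{Ab}_\mathbb{Q}^{-1}(J)$ would have to contain the whole of $\ker \mathbf{Ab}_\mathbb{Q}$, and one can exhibit a commutator class $[uvu^{-1}v^{-1}]$ not coming from $C_n$ that fails to be $\sim_n$ to the trivial loop (for instance, a commutator involving no $c$ at all, for which no $T_m$-operation applies). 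Thus $I_n$ is not symmetric in the sense of \cref{def6} for any $n \geq 0$.

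For the last clause, about intersections, I would cite \cref{thm5} directly: under the hypothesis that $S$ has more than one boundary component and the fixed boundary loop is a free generator of $\pi_1(S)$, that theorem already establishes $\bigcap_{n \geq 0} I_n = \{0_{\mathbb{Q}[\hat{\pi_0}]}\}$. The only substantive point in the synthesis is the non-symmetry step; the hard work is to confirm that the specific elements of $I_n$ produced by \cref{lm12} witness a genuine failure of $I_n$ to be a preimage of any ideal of $\mathbb{Q}[A(n)]$, and this is where I would expect the main (if modest) obstacle to lie.
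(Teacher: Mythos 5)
Your synthesis matches the paper's proof exactly: it is a direct assembly of \cref{lm11}, \cref{lm12}, \cref{lm13}, and \cref{thm5} in precisely the roles you assign them. The one imprecise spot is your parenthetical claim that a commutator involving no $c$ fails to be $\sim_n$-trivial because ``no $T_m$-operation applies''---in fact any curve can be homotoped to touch the red curve, so $T_m$ is always applicable in principle---but that non-symmetry assertion is exactly the content the paper delegates wholesale to \cref{lm12}, so this is not a gap that your synthesis introduces.
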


\begin{proof}
The proof follows from \cref{lm11}, \cref{lm12}, and \cref{lm13}, and \cref{thm5}
\end{proof}

\newpage

\end{document}